
\documentclass[reqno]{amsart}

\usepackage{tikz-cd}

\usepackage{fullpage,dsfont}

\usepackage{hyperref} 

\usepackage{parskip}
\makeatletter 
\def\thm@space@setup{%
 \thm@preskip=\parskip \thm@postskip=0pt
}
\def\th@remark{%
  \thm@headfont{\itshape}%
  \normalfont 
  \thm@preskip\parskip \thm@postskip=0pt
}
\makeatother

\usepackage[nobysame,alphabetic,initials,msc-links]{amsrefs}

\usepackage[final]{pdfpages}

\usepackage{multirow}

\usepackage{array}

\usepackage{stmaryrd}

\usepackage{tikz-cd}
\usetikzlibrary{knots}
\usepackage{spath3}

\usepackage{float}

\DefineSimpleKey{bib}{how}
\DefineSimpleKey{bib}{mrclass}
\DefineSimpleKey{bib}{mrnumber}
\DefineSimpleKey{bib}{fjournal}
\DefineSimpleKey{bib}{mrreviewer}

\renewcommand{\PrintDOI}[1]{%
  \href{http://dx.doi.org/#1}{{\tt DOI:#1}}%
}
\renewcommand{\eprint}[1]{#1}
\BibSpec{book}{%
    +{}  {\PrintPrimary}                {transition}
    +{.} { \PrintDate}                  {date}
    +{.} { \textit}                     {title}
    +{.} { }                            {part}
    +{:} { \textit}                     {subtitle}
    +{,} { \PrintEdition}               {edition}
    +{}  { \PrintEditorsB}              {editor}
    +{,} { \PrintTranslatorsC}          {translator}
    +{,} { \PrintContributions}         {contribution}
    +{,} { }                            {series}
    +{,} { \voltext}                    {volume}
    +{,} { }                            {publisher}
    +{,} { }                            {organization}
    +{,} { }                            {address}
    +{,} { }                            {status}
    +{,} { \PrintDOI}                   {doi}
    +{,} { \PrintISBNs}                 {isbn}
    +{}  { \parenthesize}               {language}
    +{}  { \PrintTranslation}           {translation}
    +{;} { \PrintReprint}               {reprint}
    +{.} { }                            {note}
    +{.} {}                             {transition}
    +{}  {\SentenceSpace \PrintReviews} {review}
}
\BibSpec{article}{%
    +{}  {\PrintAuthors}                {author}
    +{,} { \textit}                     {title}
    +{.} { }                            {part}
    +{:} { \textit}                     {subtitle}
    +{,} { \PrintContributions}         {contribution}
    +{.} { \PrintPartials}              {partial}
    +{,} { }                            {journal}
    +{}  { \textbf}                     {volume}
    +{}  { \PrintDatePV}                {date}
    +{,} { \issuetext}                  {number}
    +{,} { \eprintpages}                {pages}
    +{,} { }                            {status}
    +{,} { \PrintDOI}                   {doi}
    +{,} { \eprint}        {eprint}
    +{}  { \parenthesize}               {language}
    +{}  { \PrintTranslation}           {translation}
    +{;} { \PrintReprint}               {reprint}
    +{.} { }                            {note}
    +{.} {}                             {transition}
    +{}  {\SentenceSpace \PrintReviews} {review}
}
\BibSpec{collection.article}{%
    +{}  {\PrintAuthors}                {author}
    +{,} { \textit}                     {title}
    +{.} { }                            {part}
    +{:} { \textit}                     {subtitle}
    +{,} { \PrintContributions}         {contribution}
    +{,} { \PrintConference}            {conference}
    +{}  {\PrintBook}                   {book}
    +{,} { }                            {booktitle}
    +{,} { \PrintDateB}                 {date}
    +{,} { pp.~}                        {pages}
    +{,} { }                            {publisher}
    +{,} { }                            {organization}
    +{,} { }                            {address}
    +{,} { }                            {status}
    +{,} { \PrintDOI}                   {doi}
    +{,} { \eprint}        {eprint}
    +{}  { \parenthesize}               {language}
    +{}  { \PrintTranslation}           {translation}
    +{;} { \PrintReprint}               {reprint}
    +{.} { }                            {note}
    +{.} {}                             {transition}
    +{}  {\SentenceSpace \PrintReviews} {review}
}
\BibSpec{misc}{%
  +{}{\PrintAuthors}  {author}
  +{,}{ \textit}      {title}
  +{.}{ }             {how}
  +{}{ \parenthesize} {date}
  +{,} { available at \eprint}        {eprint}
  +{,}{ available at \url}{url}
  +{,}{ }             {note}
  +{.}{}              {transition}
}

\usepackage{amssymb, amsfonts, amsxtra, amsmath}
\usepackage{mathrsfs}
\usepackage{mathdots}
\usepackage{wasysym}
\usepackage[all]{xy}
\usepackage{bbm}
\usepackage{calc}
\usepackage{accents}
\usepackage{combelow}

\numberwithin{equation}{section}

\newtheorem{Theorem}{Theorem}[section]
\newtheorem*{Theorem*}{Theorem}
\newtheorem{Def}[Theorem]{Definition}
\newtheorem{Lem}[Theorem]{Lemma}
\newtheorem{Prop}[Theorem]{Proposition}
\newtheorem{Cor}[Theorem]{Corollary}

\newtheorem{Exa}[Theorem]{Example}

\newcommand\bp{\begin{proof}}
\newcommand\ep{\end{proof}}

\mathchardef\mhyph="2D

\DeclareMathOperator{\Ad}{\mathrm{Ad}}

\DeclareMathOperator{\End}{\mathrm{End}}

\DeclareMathOperator{\Hilb}{\mathrm{Hilb}}

\DeclareMathOperator{\id}{\mathrm{id}}

\DeclareMathOperator{\Rep}{\mathrm{Rep}}

\DeclareMathOperator{\Ker}{\mathrm{Ker}}

\DeclareMathOperator{\Lin}{\mathrm{Lin}}

\newcommand{\cop}{\mathrm{cop}}

\newcommand{\wt}{\mathrm{wt}}

\newcommand{\msC}{\mathscr{C}}
\newcommand{\msD}{\mathscr{D}}

\newcommand{\msF}{\mathscr{F}}
\newcommand{\msG}{\mathscr{G}}

\newcommand{\msH}{\mathscr{H}}
\newcommand{\msI}{\mathscr{I}}

\newcommand{\msM}{\mathscr{M}}

\newcommand{\msU}{\mathscr{U}}

\newcommand{\mfa}{\mathfrak{a}}

\newcommand{\mfb}{\mathfrak{b}}

\newcommand{\mfg}{\mathfrak{g}}

\newcommand{\mfh}{\mathfrak{h}}

\newcommand{\mfI}{\mathfrak{I}}
\newcommand{\mfk}{\mathfrak{k}}
\newcommand{\mfl}{\mathfrak{l}}

\newcommand{\mfn}{\mathfrak{n}}

\newcommand{\mfsl}{\mathfrak{sl}}
\newcommand{\mfso}{\mathfrak{so}}

\newcommand{\mfsu}{\mathfrak{su}}
\newcommand{\mft}{\mathfrak{t}}
\newcommand{\mfu}{\mathfrak{u}}

\newcommand{\mcD}{\mathcal{D}}

\newcommand{\Hsp}{\mathcal{H}}
\newcommand{\Gsp}{\mathcal{G}}
\newcommand{\mcI}{\mathcal{I}}

\newcommand{\mcO}{\mathcal{O}}

\newcommand{\mcU}{\mathcal{U}}

\newcommand{\C}{\mathbb{C}}

\newcommand{\R}{\mathbb{R}}

\newcommand{\Z}{\mathbb{Z}}

\newcommand{\opp}{\mathrm{op}}

\title{Quantisation of semisimple real Lie groups}
\author{K.\ De Commer}
\address{Vrije Universiteit Brussel}
\email{kenny.de.commer@vub.be}

\begin{document}
\maketitle

\begin{abstract}
We provide a novel construction of quantized universal enveloping $*$-algebras of real semisimple Lie algebras, based on Letzter's theory of quantum symmetric pairs. We show that these structures can be `integrated', leading to a quantization of the group C$^*$-algebra of an arbitrary semisimple algebraic real Lie group. 
\end{abstract}

\section*{Introduction}


The theory of quantum groups, as it was initiated through the works of M.\ Jimbo \cite{Jim85} and V.\ Drinfeld \cite{Dri87}, is by now an extensive framework with ramifications in many different areas of mathematics. The main object is a Hopf algebra $U_q(\mfg)$, depending on a parameter $q$ (either formal or scalar) and a semisimple (complex) Lie algebra $\mfg$, with $U_q(\mfg)$ deforming the classical universal enveloping algebra $U(\mfg)$ of $\mfg$. Suitably interpreted, one can say that $U(\mfg)$ arises in the limit as $q \rightarrow 1$. 

One can easily make sense of a \emph{compact} form of $U_q(\mfg)$. This entails providing $U_q(\mfg)$ with the structure of a Hopf \emph{$*$-algebra}, where $*$ is a particular anti-linear, anti-multiplicative and comultiplicative map on $U_q(\mfg)$. Classically, when $q=1$, this $*$-structure will restrict to an anti-multiplicative, anti-linear map $\mfg \rightarrow \mfg$ leading to the compact form 
\[
\mfu = \{X \in \mfg \mid X^* = -X\} \subseteq \mfg.
\]
The representation theory of $U_q(\mfu)$, meaning $*$-representations on (finite-dimensional) Hilbert spaces, can then be directly compared to the one of $\mfu$. For example, in both cases the irreducible representations can be naturally parametrized by (the same set of) highest weights. Through duality, it can also be connected directly to the operator algebraic framework of \emph{compact quantum groups} \cite{Wor87,LS91,DK94}.  


The quantisation of other real forms of $\mfg$ has known a much slower progress. Although there is a direct approach through the consideration of appropriate Hopf algebra automorphisms of $U_q(\mfg)$ \cite{Twi92}, the ensuing representation theory of these quantized real forms of $\mfg$, now necessarily on infinite-dimensional Hilbert spaces, has met with many stumbling blocks and analytical difficulties, see e.g.\ \cite{Wor91,Kor94,Wor00,KK03}. This has made it very difficult to build a satisfying theory in arbitrary rank.  

In this paper, we will consider a novel approach towards the quantisation problem for real semisimple Lie groups. This method requires more preparations on the algebraic side, but has as an immediate payoff that there are no longer any analytic issues at stake when considering the associated representation theory. The construction is built on two fundamental principles: 
\begin{itemize}
\item The maximal compact Lie subalgebra $\mfk$ of our real form $\mfl$ of $\mfg$ should be given by a \emph{symmetric pair coideal subalgebra} $U_q(\mfk)$ of $U_q(\mfu)$ \cite{Let99}. 
\item The associated quantized enveloping algebra $U_q(\mfl)$ should be obtained from $U_q(\mfk)$ through a generalisation of the \emph{Drinfeld double construction} to coideal subalgebras (see Section \ref{SecDrinfDoub}).   
\end{itemize}

This strategy was already explored in \cite{DCDz24}, where the specific case of $\mfl = \mfsl(2,\R)$ was considered, and in \cite{DCDz21}, where in more generality the concept of Drinfeld double coideals was developed in an operator algebraic setting. 

Here we take the opportunity to specifically set up the framework and formalism to consider quantizations of arbitrary semisimple  algebraic real Lie groups. Our scope is rather modest - we will simply gather the necessary ingredients from the literature to introduce and motivate our definition. Much of the remaining analysis (and hard work) will be left for future occasions.




This paper is organized as follows: in the \emph{first section}, we introduce the algebraic framework of \emph{Doi-Koppinen data and Doi-Koppinen modules}. In the \emph{second section}, we endow such Doi-Koppinen data with a unitary structure, so that the resulting representation theory on (pre-)Hilbert spaces can be considered. In the \emph{third section}, we show how Doi-Koppinen modules can be understood through a generalisation of the Drinfeld double construction. In the \emph{fourth section}, we then explain how Doi-Koppinen data can be obtained from a given Hopf $*$-algebra $U$ with a good representation theory, and a given left coideal $*$-subalgebra $I \subseteq U$. In the \emph{fifth section}, we explain how such inclusions $I\subseteq U$ arise naturally from the theory of  symmetric pair coideal subalgebras as developed by G. Letzter \cite{Let99}. In the \emph{sixth section}, we then apply the Drinfeld double construction to these latter coideals to arrive at the quantization of (the convolution algebra of) semisimple algebraic real Lie groups.

\emph{Acknowledgements}: Part of these results were presented at the XXXIX Workshop on Geometric Methods in Physics in Bia\l ystok, Poland, in 2022, at the Operator Algebra seminar at the Université de Caen Normandie in June 2023, and at the Conference `Quantum Groups and Noncommutative Geometry' in Prague in 2023. I thank all organizers and participating colleagues for their interest and for the opportunity to present this work. This research was funded by the FWO grant G032919N.

\emph{Notation}: if $V$ is a complex vector space, we denote by $\Lin_{\C}(V,\C)$ its linear dual. We occasionally use 
\[
V^{\circ} = \Lin_{\C}(V,\C)
\] 
as a short-hand. We write a generic $\C$-valued bilinear pairing as 
\[
\tau: V \times W \rightarrow \C,\qquad (v,w)\mapsto \tau(v,w). 
\]
We sometimes use this generic notation also when $V \subseteq \Lin_{\C}(W,\C)$ or $W \subseteq \Lin_{\C}(V,\C)$. Similarly, when $V_i$ is paired with $W_i$ for $i\in \{1,2\}$, we write by default  $\tau$ for the unique bilinear pairing between $V_1\otimes V_2$ and $W_1\otimes W_2$ such that
\[
\tau(v_1\otimes v_2,w_1\otimes w_2) = \tau(v_1,w_1)\tau(v_2,w_2),\qquad \forall v_i\in V_i,w_i\in W_i. 
\]


\section{Doi-Koppinen modules}

We recall some well-known constructions in the setting of Hopf algebras, see e.g.\ \cite{DNR01}.  

We work over the ground field $\C$, although for the moment a greater generality would be allowed. If $A = (A,m_A)$ is a unital algebra, we denote its unit by $1 = 1_A$, and if $(C,\Delta_C)$ is a (co-unital) coalgebra, we denote its counit by $\varepsilon = \varepsilon_C$. We then use the sumless Sweedler notation for the coproduct on $C$: 
\[
\Delta_C(c) = c_{(1)}\otimes c_{(2)},\qquad c\in C. 
\]
If $(A,\Delta_A)$ is a Hopf algebra, we denote its antipode by $S =S_A$. 

Modules over a unital algebra will always be assumed to be unital, and comodules over a counital coalgebra will be assumed counital. If $\delta_M: M \rightarrow M\otimes C$ is a right $C$-comodule and $\delta_N: N \rightarrow C\otimes N$ a left $C$-comodule, we accordingly write
\[
\delta_M(m) = m_{(0)}\otimes m_{(1)},\qquad \delta_N(n) = n_{(-1)}\otimes n_{(0)},\qquad m\in M,n\in N. 
\]
We can construct from the pair $(M,N)$ the \emph{cotensor product}, which is the vector space
\[
M \overset{C}{\square}N := \{z \in M \otimes N \mid (\delta_M\otimes \id)z = (\id\otimes \delta_N)z\}.
\]
We recall that a coalgebra $C$ is \emph{cosemisimple} if and only if there exist finite-dimensional vector spaces $(V_{\alpha})_{\alpha\in \mfI}$, indexed by some set $\mfI$, with
\begin{equation}\label{EqDecompCoalg}
C\cong \oplus_{\alpha \in \mfI} \End_{\C}(V_{\alpha})^{\circ}.
\end{equation}
Here we consider the direct sum coalgebra on the right, where each $\End_{\C}(V_{\alpha})^{\circ}$ is equipped with the coalgebra structure dual to the usual algebra structure on $\End_{\C}(V_{\alpha})$. If we choose a basis 
\[
\{e_i^{\alpha} \mid 1\leq i \leq \dim(V_{\alpha})\}
\] 
of each $V_{\alpha}$, and write $e_{ij}^{\alpha}$ for the associated matrix units and $\omega_{ij}^{\alpha}$ for the dual basis, the coproduct can be written explicitly as
\[
\Delta_C(\omega_{ij}^{\alpha}) = \sum_{k=1}^{\dim(V_{\alpha})} \omega_{ik}^{\alpha}\otimes \omega_{kj}^{\alpha},\qquad \alpha \in \mfI, 1\leq i,j\leq \dim(V_{\alpha}). 
\] 
The following structures were considered in \cite{Doi92,Kop95}. We follow the terminology of \cite{Sch00},  although the terminology of Doi-Hopf datum/Doi-Hopf module is also common \cite{CMS97}. 
\begin{Def}
A (left-right) \emph{Doi-Koppinen}  \emph{datum} consists of a triple $(A,B,C)$ with $A=(A,\Delta_A)$ a Hopf algebra, $B= (B,\delta_B)$ a right $A$-comodule algebra and $C = (C,\Delta_C)$ a left $A$-module coalgebra. 

A \emph{Doi-Koppinen} \emph{module} $(V,\pi_V,\delta_V)$ consists of a right $C$-comodule $V= (V,\delta_V)$, equipped with a left $B$-module structure $V = (V,\pi_V)$ such that the following compatibility condition holds: \begin{equation}\label{EqCompRelRel}
\delta_V(bv) = (\pi_V\otimes \id)(\delta_B(b))\delta_V(v),\qquad \forall b\in B,v\in V. 
\end{equation}
As usual, we drop the notation $\pi_V$ whenever it is clear by the context.
\end{Def}

We write the associated linear category of Doi-Koppinen modules as ${}_B\msM^C$.

An important class of Doi-Koppinen data can be constructed as follows. Let $(A,\Delta_A)$ be a Hopf algebra, and let $C$ be a quotient left $A$-module coalgebra of $A$ via a map
\[
\pi_C: A \twoheadrightarrow C.
\] 
If then $M, N$ are resp.\ right and left $C$-comodules, we consider the subspaces 
\[
M^C = \{m\in M\mid (\id\otimes \pi_C)\delta_M(m) = m\otimes \pi_C(1_A)\} \subseteq M,
\]
\[
{}^CN =  \{n\in N\mid (\pi_C\otimes \id)\delta_N(n) =\pi_C(1_A)\otimes n\} \subseteq N.  
\]
In particular, we obtain the unital \emph{subalgebra} 
\[
B := {}^CA.
\]
Here we think of $A$ as the function algebra on a quantum group, $C$ as the function algebra on a quantum subgroup, and $B$ as the algebra of functions on the associated homogeneous space of left cosets. The triple $(A,B,C)$ indeed forms a Doi-Koppinen datum, since one easily checks that 
\[
\Delta_A(B) \subseteq B\otimes A,
\]
i.e.\ $B\subseteq A$ is a \emph{right coideal} subalgebra. So, $\delta_B := (\Delta_A)_{\mid B}$ turns $B$ into a right $A$-comodule algebra.

\begin{Def}
We call a Doi-Koppinen datum of \emph{coideal type} $(A,{}^CA,C,\pi_C)$ any Doi-Koppinen datum arising in the above way from a Hopf algebra $A$ with left $A$-module coalgebra quotient $\pi_C: A\twoheadrightarrow C$. 
\end{Def}

In case of Doi-Koppinen data of coideal type, there is a tighter link between $B$ and $C$, leading one to expect more structure on ${}_B\msM^C$. This is indeed the case, as exemplified by the following theorem. The version we need is found in \cite{MS99}, but the general principle is already in \cite{Tak79}. We will need the category ${}_A^C\msM^C$ of $C$-bicomodules $(V,\delta_l,\delta_r)$ equipped with a compatible left $A$-module structure, the compatibility being that 
\[
(av)_{(0)}\otimes (av)_{(1)} = a_{(1)}v_{(0)}\otimes a_{(2)}v_{(1)},\quad (av)_{(-1)}\otimes (av)_{(0)} = a_{(1)}v_{(-1)}\otimes a_{(2)}v_{(0)},\qquad \forall a\in A,v\in V. 
\]
The latter category naturally carries a monoidal structure through the cotensor product:
\[
(M,N)\mapsto {}_{\bullet}^{\bullet}M\overset{C}{\square}\, {}_{\bullet}N^{\bullet}.
\] 
The bullets indicate where the relevant structure acts, e.g.\ the left $A$-module structure is the diagonal one. 
\begin{Theorem}\label{TheoTakeu}
Assume that $(A,B,C)$ is a Doi-Koppinen datum of coideal type, and assume that the antipode $S_A$ of $A$ is invertible. Assume moreover that $C$ is cosemisimple. Then the following hold:
\begin{enumerate}
\item $\Ker(\pi_C) = AB_+$, where $B_+ = \Ker((\varepsilon_A)_{\mid B})$.
\item $A$ is faithfully flat as a left and as a right $B$-module. 
\item There are quasi-inverse equivalences of categories 
\begin{equation}\label{EqEquivCatTak}
{}_B\msM^C \underset{F}{\overset{G}{\leftrightarrows}} {}_A^C\msM^C,\qquad
F(V) = {}^{\bullet}_{\bullet}A^{\bullet} \otimes_B V^{\bullet},\quad G(M) = {}_{\bullet}^CM^{\bullet}.
\end{equation}
The equivalence is implemented concretely by the isomorphisms
 \[
V \rightarrow G(F(V)),\quad v \mapsto 1_A \otimes v,\qquad F(G(M)) \rightarrow M,\quad a \otimes m \mapsto am.
\]
\end{enumerate} 
\end{Theorem}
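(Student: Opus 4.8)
The plan is to recognise this as the structure theorem for relative Hopf modules over a faithfully flat, $C$-Galois (``principal'') extension $B\subseteq A$, specialised to the cosemisimple situation, following the strategy of \cite{Tak79} and \cite{MS99}. The two standing hypotheses play complementary roles: invertibility of $S_A$ is what lets one move freely between the left and the right (co)module structures appearing in ${}_A^C\msM^C$, while cosemisimplicity of $C$ is the source both of exactness (every $C$-comodule is injective) and of an explicit ``translation map'' built from the matrix-coalgebra decomposition \eqref{EqDecompCoalg}.

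I would begin with part (1). The inclusion $AB_+\subseteq \Ker(\pi_C)$ is formal: since $C$ is a quotient of $A$ as a left $A$-module coalgebra, $\pi_C$ is a left $A$-module map, so $\pi_C(ab)=a\cdot\pi_C(b)$; and for $b\in B={}^CA$ the defining coinvariance condition, after applying $\id\otimes\varepsilon_A$, yields $\pi_C(b)=\varepsilon_A(b)\pi_C(1_A)$, which vanishes on $B_+$. The reverse inclusion $\Ker(\pi_C)\subseteq AB_+$ is the substantive one; I would obtain it as a by-product of the Galois/flatness analysis below, since bijectivity of the canonical map forces $C\cong A/AB_+$.

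The technical heart is to show that the canonical Galois map
\[
\beta: A\otimes_B A \rightarrow A\otimes C,\qquad a\otimes a' \mapsto a\,a'_{(1)}\otimes \pi_C(a'_{(2)}),
\]
is bijective, and that $A$ is faithfully flat over $B$, which is part (2). For bijectivity I would use cosemisimplicity directly: lifting the matrix units $\omega^\alpha_{ij}$ of \eqref{EqDecompCoalg} to $A$ and assembling an explicit inverse block by block, with invertibility of $S_A$ supplying the required preimages; this reduces to a finite Sweedler computation in each block $\alpha$. Faithful flatness then follows because cosemisimplicity makes $A$ injective as a right $C$-comodule, so the coinvariants functor ${}^C(-)$ is exact and faithful, and this transfers to $A\otimes_B(-)$ once $\beta$ is invertible.

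With these two inputs in hand, part (3) becomes the formal structure-theorem argument. One first checks that $F$ and $G$ are well-defined on the stated categories and that $(F,G)$ is an adjoint pair with the displayed unit $v\mapsto 1_A\otimes v$ and counit $a\otimes m\mapsto am$ (routine verifications of the module and bicomodule compatibilities in Sweedler notation). It then remains to see these are isomorphisms. That $V\to {}^C(A\otimes_B V)$ is bijective follows from ${}^CA=B$ together with faithful flatness, the coinvariants of an induced module being exactly $1_A\otimes V$. The isomorphism $A\otimes_B {}^CM\to M$ is the fundamental theorem of Hopf modules in this relative setting: surjectivity and injectivity of the multiplication map are precisely what bijectivity of $\beta$ plus faithful flatness deliver. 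I expect this last isomorphism $F(G(M))\cong M$ to be the main obstacle, as it is where all the hypotheses must be combined at once; the rest is bookkeeping.
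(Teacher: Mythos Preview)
The paper does not give its own proof of this theorem: it is stated as a result quoted from \cite{MS99} (with the general principle attributed to \cite{Tak79}), and no argument is supplied beyond that citation. Your proposal is precisely a sketch of the M\"uller--Schneider strategy that the paper is invoking, so there is nothing to compare against; your outline is consistent with the cited source and would be an acceptable expansion of what the paper leaves implicit.
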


A consequence of \eqref{EqEquivCatTak} is that ${}_B\msM^C$ inherits the monoidal structure of ${}^C_A\msM^C$, resulting in the following monoidal structure on ${}_B\msM^C$: 
\begin{equation}
V\boxtimes W := G(F(V)\overset{C}{\square} F(W)) = {}^C((A\otimes_B V)\overset{C}{\square}(A\otimes_B W))\cong {}_{\bullet}V \overset{C}{\square} ({}_{\bullet}A^{\bullet}\otimes_B W^{\bullet}). 
\end{equation}

There seems however to be no general way to implement the resulting tensor product more directly on ${}_B\msM^C$, say by endowing the usual vector space tensor product with an appropriate Doi-Koppinen module structure.

The following example is considered also in \cite{CMS97}. 
\begin{Exa}\label{ExaDiagonalCoid}
Let $H$ be a Hopf algebra with invertible antipode. Write $H^{\opp}$ for $H$ with the opposite product, but the original coproduct. Then on the tensor product Hopf algebra $A = H\otimes H^{\opp}$ we can consider 
\[
\pi_H: A \rightarrow H,\quad h \otimes k \mapsto hk,
\]
realizing $C := H$ as quotient left $A$-module coalgebra for the $A$-module structure
\[
(h\otimes k)\cdot c := hck,\qquad h,k,c\in A.
\]
It is easily checked that we can identify as algebras
\begin{equation}\label{EqRightCoideal}
H \cong B = {}^HA,\qquad h \mapsto h_{(2)}\otimes S_H^{-1}(h_{(1)}),
\end{equation}
the resulting right $A$-comodule structure on $H$ being
\[
\delta_B(h) = h_{(2)} \otimes (h_{(3)}\otimes S_H^{-1}(h_{(1)})). 
\]
We thus obtain a Doi-Koppinen datum $(A,B,C,\pi_H)  = (A,H,H,\pi_H)$ of coideal type. A Doi-Koppinen module is then nothing but a Yetter-Drinfeld module for $H$, i.e.\ a right $H$-comodule $(V,\delta)$ with left $H$-module structure interacting via 
\[
(hv)_{(0)}\otimes (hv)_{(1)} = h_{(2)}v_{(0)}\otimes h_{(3)}v_{(1)}S_H^{-1}(v_{(3)}). 
\]
Equivalence \eqref{EqEquivCatTak} is the well-known equivalence between the category of Yetter-Drinfeld modules and the category ${}^H_H\msM^H_H = {}_{H\otimes H^{\opp}}\!\!\!{}^H\msM^H$ of tetramodules (a cosemisimplicty assumption on $H$ or $A$ is in this case not needed).

We also have compatibility of the Doi-Koppinen and Yetter-Drinfeld tensor product for Yetter-Drinfeld modules under the above correspondence. Indeed, by the isomorphism
\[
{}^{\bullet}H \otimes H_{\bullet} \cong {}^{\bullet}A_{\bullet},\qquad h \otimes k \mapsto h k_{(2)}\otimes S_H^{-1}(k_{(1)}),\quad xy_{(1)}\otimes S_H(y_{(2)})\mapsfrom x\otimes y. 
\]
as left $H$-comodule and right $H$-module, it is clear that, for $V,W$ Yetter-Drinfeld modules, we can identify
\[
V\otimes W \cong {}_{\bullet}V \overset{H}{\square}(A^{\bullet}\otimes_B W^{\bullet}),\qquad v\otimes w \mapsto v_{(0)} \otimes ((v_{(1)}\otimes 1) \otimes w),
\]
the resulting tensor product on $V\otimes W$ indeed coinciding with the Yetter-Drinfeld one: 
\[
h(v\otimes w) = h_{(1)}v \otimes h_{(2)}w,\qquad \delta(v\otimes w) = v_{(0)}\otimes w_{(0)}\otimes v_{(1)}w_{(1)}. 
\]



\end{Exa}

\section{Unitary Doi-Koppinen modules}
\begin{Def}
A (unital) \emph{$*$-algebra} is a (unital) algebra $B$ equipped with an involutive, anti-linear, anti-multiplicative map 
\[
*: B \rightarrow B,\quad b \mapsto b^*,\qquad (ab)^* = b^*a^*.
\]
\end{Def}
\begin{Def}
Let $B$ be a $*$-algebra. If $\Hsp_0$ is a pre-Hilbert space, a \emph{$*$-representation} $\pi_0$ of $B$ on $\Hsp_0$ is a $B$-module structure $\pi_0$ on $\Hsp_0$ such that 
\[
\langle v,\pi_0(b)w\rangle = \langle \pi_0(b^*)v,w\rangle,\qquad \forall v,w\in \Hsp_0, \forall b\in B. 
\]

We say that the $*$-representation is \emph{bounded} if $\pi_0(b)$ is a bounded operator on $\Hsp_0$ for all $b\in B$. 
\end{Def}
If $(\Hsp_0,\pi_0)$ is a bounded $*$-representation, we can uniquely extend $\pi_0$ to a $*$-representation $\pi$ of $B$ on the Hilbert space completion $\Hsp$ of $\Hsp_0$. On the other hand, any Hilbert space $*$-representation is automatically bounded, by the uniform boundedness principle.

\begin{Def}
We say that a $*$-algebra $B$ is \emph{uniformly C$^*$-bounded} if any element of $B$ has a uniform bound for its norm with respect to any Hilbert space $*$-representation. 

We say that $B$ is \emph{strongly} uniformly C$^*$-bounded if moreover all of its pre-Hilbert space $*$-representations are bounded. 

We say that $B$ is \emph{C$^*$-faithful} if the $*$-representations of $B$ on Hilbert spaces separate the elements of $B$. 

We say that a $*$-representation $\pi$ of $B$ on a Hilbert space $\Hsp$ is \emph{non-degenerate} if $\pi(B)\Hsp$ is dense in $\Hsp$. 
\end{Def}
If $B$ is unital, then non-degeneracy of a $*$-representation is equivalent with $\pi$ being a unital $*$-representation, i.e.\ $\pi(1_B) = \id_{\Hsp}$. 



We now consider a dual notion \cite{ASH09,Chi18}. 
\begin{Def}
A \emph{$\dag$-structure} on a coalgebra $C$ is an involutive, anti-linear, anti-comultiplicative map 
\[
\dag: C \rightarrow C,\qquad c \mapsto c^{\dag},\qquad \Delta_C(c^{\dag}) = c_{(2)}^{\dag}\otimes c_{(1)}^{\dag}.
\]
\end{Def}
For example, if $\Hsp$ is a finite-dimensional Hilbert space, then $\End_{\C}(\Hsp)^{\circ}$ is a $\dag$-coalgebra by
\begin{equation}\label{EqDualForm}
\omega^{\dag}(x) := \overline{\omega(x^*)},\qquad x\in \End_{\C}(\Hsp),\omega \in \End_{\C}(\Hsp)^{\circ}.
\end{equation}
We will call a $\dag$-coalgebra of this form a \emph{basic (or simple) $C^{\dag}$-coalgebra}.

\begin{Def}
If $C$ is a $\dag$-coalgebra, a \emph{unitary right $C$-comodule} is any pre-Hilbert space $\Hsp_0$ equipped with a right $C$-comodule structure satisfying the following condition: 
\begin{equation}\label{EqLeftRightSame}
\langle v,w_{(0)}\rangle w_{(1)} = \langle v_{(0)},w\rangle v_{(1)}^{\dag},\qquad \forall v,w\in \Hsp_0.
\end{equation}
We then write this element as $U(v,w)\in C$, and refer to it as a \emph{matrix coefficient} of $\Hsp_0$.

We call $\Hsp_0$ \emph{locally complete} if, for any finite-dimensional subcoalgebra $D \subseteq C$, the space
\[
\Hsp_D = \{v\in \Hsp_0 \mid \delta(v)\in \Hsp_0\otimes D\}
\]
is complete, i.e.\ a \emph{Hilbert} space. 

We call $\Hsp_0$ \emph{locally finite} if moreover the $\Hsp_D$ above are all finite-dimensional. 
\end{Def}


In the following, we write $\Hilb^C$ for the category of locally complete unitary right $C$-comodules, with adjointable linear maps as morphisms. It is then in fact a $*$-category \cite[Definition 3.1]{Chi18}.  

\begin{Def}
We call $C$ a \emph{C$^\dag$-coalgebra} if it equals its set of matrix coefficients of unitary $C$-comodules. 
\end{Def}

It is elementary to show that a $\dag$-coalgebra $C$ is a C$^\dag$-coalgebra if and only if $C$ is a direct sum of basic $\dag$-coalgebras, i.e.\ there exist finite-dimensional Hilbert spaces $(\Hsp_{\alpha})_{\alpha\in \mfI}$ with
\begin{equation}\label{EqDecompCoalg}
C\cong \oplus_{\alpha \in \mfI} \End_{\C}(\Hsp_{\alpha})^{\circ}.
\end{equation} 

We write 
\[
\msI = \Lin_{\C}(C,\C)
\] 
for the \emph{full linear dual} of $C$. Then $\msI$ is a unital algebra under the convolution product
\[
\omega \chi := (\omega\otimes \chi)\Delta_C,
\] 
and, if $C$ is a $\dag$-coalgebra, $\msI$ becomes a $*$-algebra by means of \eqref{EqDualForm}. 

\begin{Def}
If $C$ is a $\dag$-coalgebra, we define its \emph{restricted dual} to be
\[
\mcI = \{x \in \msI\mid \msI x\msI \textrm{ is finite-dimensional}\}.
\]
\end{Def}

The subspace $\mcI$ is an ideal inside $\msI$, and hence typically a \emph{non-unital} $*$-subalgebra of $\msI$. In general, $\mcI$ can be quite small. If however $C$ is a C$^{\dag}$-coalgebra, the concrete identification \eqref{EqDecompCoalg} allows us to write 
\[
\msI\cong \prod_{\alpha \in \mfI} \End_{\C}(\Hsp_{\alpha}),\qquad \mcI \cong \oplus_{\alpha\in \mfI} \End_{\C}(\Hsp_{\alpha}),
\]
so in particular $\mcI$ is enough to determine $\msI$. 

Recall that we use $\tau$ as a generic notation for a bilinear pairing. Then when $C$ is a C$^{\dag}$-coalgebra, any unitary right $C$-comodule $(\Hsp_0,\delta)$ leads to a $*$-representation of $\msI$ on $\Hsp_0$ via 
\begin{equation}\label{EqDefIdMod}
\hat{\pi}_0(x)v = (\id\otimes \tau(-,x))\delta(v),\qquad x\in \msI,v\in \Hsp_0.
\end{equation}
The following proposition is elementary to prove from the fact that  $\mcI$ is isomorphic to  a direct sum of finite-dimensional matrix $*$-algebras.
\begin{Prop}\label{PropUnitComodEqui}
Let $(C,\Delta_C,\dag)$ be a C$^{\dag}$-coalgebra, and let $\mcI$ be the $*$-algebra introduced above. We then have that $\mcI$ is strongly uniformly C$^*$-bounded and C$^*$-faithful, and there is a one-to-one correspondence
\begin{equation}\label{EqOneToOneUnCom}
(\Hsp_0,\delta)\quad \leftrightarrow \quad (\Hsp,\hat{\pi})
\end{equation}
between locally complete unitary right $C$-comodules and non-degenerate $*$-representations of $\mcI$ on Hilbert spaces. 
\end{Prop}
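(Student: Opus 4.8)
The plan is to reduce every claim to the concrete description recorded just before the statement, namely the isomorphism $\mcI \cong \oplus_{\alpha\in\mfI}\End_{\C}(\Hsp_{\alpha})$, under which the $*$-operation \eqref{EqDualForm} becomes the usual adjoint in each matrix block, so that $\mcI$ is a direct sum of finite-dimensional matrix $*$-algebras. I would write $p_{\alpha}\in\mcI$ for the unit of the block $\End_{\C}(\Hsp_{\alpha})$; these are mutually orthogonal self-adjoint idempotents, and under the representation \eqref{EqDefIdMod} their images are self-adjoint idempotents as well.

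For the first assertion, fix $x\in\mcI$. By finiteness of its support there is a finite $F\subseteq\mfI$ with $x\in\mcI_F:=\oplus_{\alpha\in F}\End_{\C}(\Hsp_{\alpha})$, a finite-dimensional C$^*$-algebra with unit $p_F=\sum_{\alpha\in F}p_{\alpha}$ and C$^*$-norm $\|x\|$. Since $\|x\|^2 p_F - x^*x$ has nonnegative spectrum in $\mcI_F$, I would write it as $y^*y$ with $y\in\mcI_F$. Then in any pre-Hilbert $*$-representation $\pi_0$, using $\pi_0(p_F)\pi_0(x)=\pi_0(x)$ and $p_F=p_F^*=p_F^2$, one obtains
\[
\|\pi_0(x)v\|^2 = \langle \pi_0(x^*x)v,v\rangle = \|x\|^2\|\pi_0(p_F)v\|^2 - \|\pi_0(y)v\|^2 \leq \|x\|^2\|v\|^2 .
\]
This shows that $\pi_0(x)$ is bounded with $\|\pi_0(x)\|\leq\|x\|$ uniformly over all (pre-)Hilbert space representations, giving strong uniform C$^*$-boundedness. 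For C$^*$-faithfulness I would simply note that the standard representations of the blocks $\End_{\C}(\Hsp_{\alpha})$ on $\Hsp_{\alpha}$ separate the elements of $\mcI$.

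For the correspondence I would exploit that both sides decompose along the $p_{\alpha}$. On the comodule side, for $D_{\alpha}=\End_{\C}(\Hsp_{\alpha})^{\circ}\subseteq C$ the subspace $\Hsp_{D_{\alpha}}$ is precisely the range of the self-adjoint idempotent $\hat\pi_0(p_{\alpha})$, and because $\delta(v)$ has finite support every $v$ lies in $\oplus_{\alpha\in F}\Hsp_{D_{\alpha}}$ for some finite $F$; hence $\Hsp_0=\oplus^{\mathrm{alg}}_{\alpha}\Hsp_{D_{\alpha}}$ as an orthogonal algebraic direct sum, and local completeness is equivalent to each $\Hsp_{D_{\alpha}}$ being a Hilbert space. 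The forward map then completes $\Hsp_0$ to $\Hsp$ and extends the bounded representation $\hat\pi_0$ of \eqref{EqDefIdMod} to $\hat\pi$; non-degeneracy holds because $\hat\pi(p_{\alpha})\Hsp=\Hsp_{D_{\alpha}}$ and these span a dense subspace. Conversely, given a non-degenerate $*$-representation $(\Hsp,\hat\pi)$, I would set $\Hsp_0=\oplus^{\mathrm{alg}}_{\alpha}\hat\pi(p_{\alpha})\Hsp$, dense by non-degeneracy, and reconstruct $\delta$ block by block: each $\hat\pi(p_{\alpha})\Hsp$ is a Hilbert-space multiple of the standard $\End_{\C}(\Hsp_{\alpha})$-module and so carries the standard $D_{\alpha}$-comodule structure, whose matrix coefficients reproduce $\hat\pi$ through \eqref{EqDefIdMod}; local completeness is automatic since the blocks are closed in $\Hsp$.

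The only genuine work is to verify that these two assignments are mutually inverse and that the reconstructed $\delta$ is actually unitary, i.e.\ satisfies \eqref{EqLeftRightSame}. By orthogonality of the blocks this reduces to the single-block case $C=\End_{\C}(\Hsp_{\alpha})^{\circ}$, $\mcI=\End_{\C}(\Hsp_{\alpha})$, which is the classical comodule–module dictionary for a simple matrix coalgebra. The hard part will be purely this bookkeeping: one must track the compatibility of the $\dag$-structure on $C$ with the $*$-structure on $\mcI$ under \eqref{EqDualForm}, so that the standard comodule comes out unitary and the isotypic decomposition is orthogonal. Once the single-block case is settled, the general statement follows by passing to the orthogonal Hilbert direct sum over $\alpha\in\mfI$.
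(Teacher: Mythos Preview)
Your proposal is correct and follows precisely the route the paper indicates: the paper does not give a detailed proof but merely states that the proposition ``is elementary to prove from the fact that $\mcI$ is isomorphic to a direct sum of finite-dimensional matrix $*$-algebras,'' and your argument is exactly this elementary verification spelled out in full. Your treatment of strong uniform C$^*$-boundedness via the square-root trick $\|x\|^2 p_F - x^*x = y^*y$ together with the automatic contractivity of self-adjoint idempotents on pre-Hilbert spaces is clean, and your reconstruction $\Hsp_0 = \oplus_{\alpha}^{\mathrm{alg}}\hat\pi(p_\alpha)\Hsp$ coincides with the paper's prescription $\Hsp_0 = \hat\pi(\mcI)\Hsp$ in \eqref{EqLocCompFin}.
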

Here $(\Hsp,\hat{\pi})$ is the completion of $(\Hsp_0,\hat{\pi}_0)$, and conversely given $\Hsp$ we define 
\begin{equation}\label{EqLocCompFin}
\Hsp_0 := \hat{\pi}(\mcI)\Hsp
\end{equation}
together with the $C$-comodule structure dual to the restriction $\hat{\pi}_0$ of $\hat{\pi}$ to operators $\Hsp_0 \rightarrow \Hsp_0$. 

Note that Proposition \ref{PropUnitComodEqui} presents one small subtlety: if we denote ${}_{\mcI} \Hilb$ the category of non-degenerate $\mcI$-representations on Hilbert spaces, we do \emph{not} obtain an equivalence of $*$-categories 
\begin{equation}\label{EqEquivCat}
\Hilb^C\rightarrow {}_{\mcI}\Hilb, 
\end{equation}
since the morphism spaces of the former are larger than the ones of the latter: norms of intertwiners need \emph{not} be uniformly bounded across the different components on the side of $\Hilb^C$!

We now recall the notion of CQG (= Compact Quantum Group) Hopf $*$-algebra \cite{DK94}. 

\begin{Def}
A \emph{Hopf $*$-algebra} is a Hopf algebra $A$ with a $*$-algebra structure preserving the coproduct: 
\[
\Delta_A(a^*) = a_{(1)}^*\otimes a_{(2)}^*.
\]
It is called a \emph{CQG Hopf $*$-algebra} if moreover there exists an invariant state $\Phi_A: A\rightarrow \C$, so 
\[
(\id\otimes \Phi_A)\Delta_A(a) = \Phi_A(a)1 = (\Phi_A\otimes \id)\Delta_A(a),\qquad \forall a\in A,
\]
\[
\Phi_A(1_A) = 1,\qquad \Phi_A(a^*a)\geq 0,\qquad \forall a\in A. 
\]
\end{Def}
We note that the invariant state on a CQG Hopf $*$-algebra is necessarily unique. 

We can endow a Hopf $*$-algebra $A$ with its canonical $\dag$-structure as a coalgebra, 
\[
a^{\dag} := S_A(a)^*,\qquad a\in A. 
\]
\begin{Prop}
The following conditions are equivalent for a Hopf $*$-algebra $A$:
\begin{itemize}
\item $A$ is a CQG Hopf $*$-algebra. 
\item The underlying $\dag$-coalgebra is a C$^{\dag}$-coalgebra. 
\item The underlying $*$-algebra is C$^*$-faithful and uniformly C$^*$-bounded.
\end{itemize}
The $*$-algebra $A$ is then automatically strongly uniformly C$^*$-bounded.
\end{Prop}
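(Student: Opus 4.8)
The plan is to establish the cycle of implications $(1)\Rightarrow(2)$, $(2)\Rightarrow(1)$, $(1)\Rightarrow(3)$ and $(3)\Rightarrow(1)$, and to read off the final assertion directly from the proof of $(1)\Rightarrow(3)$. The key observation throughout is that, for a Hopf $*$-algebra with the canonical $\dag$-structure $a^\dag = S_A(a)^*$, condition $(2)$ is just the structural statement that $A\cong\oplus_{\alpha}\End_{\C}(\Hsp_{\alpha})^{\circ}$ as a $\dag$-coalgebra, i.e.\ that $A$ is spanned by matrix coefficients of finite-dimensional unitary corepresentations.

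For $(1)\Rightarrow(2)$ I would run the Peter--Weyl argument for CQG Hopf $*$-algebras. Since $\Phi_A$ is faithful, the form $\langle a,b\rangle := \Phi_A(a^*b)$ makes $A$ a pre-Hilbert space on which the regular corepresentation is unitary, and decomposing it exhibits $A$ as a direct sum of matrix coefficients of irreducible unitary corepresentations. For such a unitary corepresentation $U^{\alpha}=(u^{\alpha}_{ij})$ on $\Hsp_{\alpha}$ the antipode satisfies $S_A(u^{\alpha}_{ij})=(u^{\alpha}_{ji})^*$, so the canonical $\dag$ sends $u^{\alpha}_{ij}\mapsto u^{\alpha}_{ji}$, which is precisely the $\dag$ of the basic $\dag$-coalgebra $\End_{\C}(\Hsp_{\alpha})^{\circ}$ of \eqref{EqDualForm}. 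Hence $A$ is a direct sum of basic $\dag$-coalgebras, i.e.\ a C$^{\dag}$-coalgebra. The converse $(2)\Rightarrow(1)$ is the harder, classical half: condition $(2)$ says exactly that $A$ is a CQG algebra in the sense of \cite{DK94}, and that this forces an invariant normalized \emph{positive} state is the main theorem there. Cosemisimplicity produces a normalized two-sided integral $\Phi_A$ by purely Hopf-algebraic means, but the positivity $\Phi_A(a^*a)\geq 0$ has to be extracted from the orthogonality relations between the unitary blocks, and this is the genuinely nonformal point of that direction.

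For $(1)\Rightarrow(3)$ and the final assertion I would use that each unitary corepresentation gives a unitary matrix $U^{\alpha}\in M_{n_{\alpha}}(A)$, so that in any (unital, hence $\pi_0(1_A)=\id$) pre-Hilbert $*$-representation $\pi_0$ the relation $\sum_k (u^{\alpha}_{ki})^* u^{\alpha}_{ki}=1_A$ yields $\sum_k\|\pi_0(u^{\alpha}_{ki})v\|^2=\|v\|^2$, whence $\|\pi_0(u^{\alpha}_{ij})v\|\leq\|v\|$. As every element of $A$ is a finite combination of such matrix coefficients, each element has a bound on its operator norm that is independent of the representation and valid already on pre-Hilbert spaces; this gives strong uniform C$^*$-boundedness at once, which is simultaneously the uniform-boundedness part of $(3)$ and the final assertion of the proposition. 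For C$^*$-faithfulness I would invoke that the Haar state of a CQG Hopf $*$-algebra is faithful, so that its GNS representation is a faithful Hilbert-space $*$-representation and therefore separates the elements of $A$.

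Finally, for $(3)\Rightarrow(1)$ I would pass to the universal C$^*$-completion: uniform C$^*$-boundedness makes $\|a\|_{u}:=\sup_{\pi}\|\pi(a)\|$ a C$^*$-seminorm and C$^*$-faithfulness makes it a norm, so $A$ embeds densely in a unital C$^*$-algebra $A_u$. Because $\varepsilon_A$ and any algebraic tensor product $\pi_1\otimes\pi_2$ of $*$-representations are again $*$-representations, $\Delta_A$ is contractive from $(A,\|\cdot\|_u)$ into $A_u\otimes_{\min}A_u$ and extends to $\bar\Delta\colon A_u\to A_u\otimes_{\min}A_u$; the Hopf cancellation identities $\Delta_A(A)(1_A\otimes A)=A\otimes A=\Delta_A(A)(A\otimes 1_A)$ supply the density conditions, so $(A_u,\bar\Delta)$ is a compact quantum group in Woronowicz's sense, and restricting its Haar state to $A$ produces the required invariant positive normalized functional. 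I expect this last implication to be the main obstacle: manufacturing an invariant state out of purely representation-theoretic boundedness forces the detour through the C$^*$-completion and relies on the nontrivial existence of the Haar state for compact quantum groups, just as the positivity of $\Phi_A$ is what makes $(2)\Rightarrow(1)$ more than a formal cosemisimplicity statement.
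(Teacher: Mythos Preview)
The paper states this proposition without proof, treating it as a known characterisation drawn from the compact quantum group literature (the reference to \cite{DK94} carries the nontrivial content). Your argument is correct and follows exactly the standard route one would expect: $(1)\Leftrightarrow(2)$ is the Peter--Weyl decomposition and its converse from \cite{DK94}; $(1)\Rightarrow(3)$ together with strong uniform C$^*$-boundedness comes from the unitarity relations $\sum_k (u^{\alpha}_{ki})^*u^{\alpha}_{ki}=1_A$; and $(3)\Rightarrow(1)$ goes through the universal C$^*$-envelope and Woronowicz's existence theorem for the Haar state. Two minor remarks: in $(1)\Rightarrow(2)$ you invoke ``since $\Phi_A$ is faithful'', but faithfulness is not part of the definition given here---it is itself a consequence of the orthogonality relations, so strictly speaking it belongs to the Peter--Weyl package you are citing rather than being an input; and in the strong boundedness step you tacitly assume $\pi_0(1_A)=\id$, which is harmless since $\pi_0(1_A)$ is a self-adjoint idempotent and one may restrict to its range.
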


\begin{Def}
Let $A$ be a Hopf $*$-algebra. 

We call \emph{left $A$-module $\dag$-coalgebra} a left $A$-module coalgebra $C$ with a $\dag$-structure satisfying
\[
(ac)^{\dag} = a^{\dag}c^{\dag},\qquad \forall a\in A,c\in C.
\]

We call \emph{right $A$-comodule $*$-algebra} a right comodule algebra $B$ with a $*$-algebra structure satisfying
\[
\delta_B(b) = b_{(0)}^*\otimes b_{(1)}^*,\qquad \forall b\in B.
\]
\end{Def}

We can now introduce unitary Doi-Koppinen data and their unitary modules. 

\begin{Def}\label{DefUnitDrinfMod}
A \emph{unitary Doi-Koppinen datum} consists of a Doi-Koppinen datum $(A,B,C)$ with $A$ a Hopf $*$-algebra, $C$ a left $A$-module $\dag$-coalgebra, and $B$ a right $A$-comodule $*$-algebra. 

A \emph{unitary Doi-Koppinen module} $(\Hsp_0,\pi_0,\delta_0)$ for $(A,B,C)$ is a pre-Hilbert space $\Hsp_0$ with a Doi-Koppinen module structure for which its $C$-comodule is unitary and its $B$-module is a $*$-representation. 

A unitary Doi-Koppinen module is called 
\begin{itemize}
\item \emph{locally complete} if its underlying unitary $C$-comodule is locally complete,
\item \emph{admissible} if its underlying $C$-comodule is locally finite, and 
\item of \emph{Harish-Chandra type} if its underlying $C$-comodule is locally finite and finitely generated as a $B$-module.
\end{itemize}
We call two unitary Doi-Koppinen modules $\Hsp_0,\Gsp_0$ \emph{weakly equivalent} if there exists an invertible adjointable intertwiner $\Hsp_0\rightarrow \Gsp_0$, and \emph{strongly equivalent} if there exists a \emph{unitary} intertwiner $\Hsp_0\rightarrow \Gsp_0$. 

A locally complete unitary Doi-Koppinen module is called \emph{irreducible} if there are no non-trivial $B$-stable locally complete unitary $C$-subcomodules.
\end{Def}

A special class of unitary Doi-Koppinen data is obtained by looking at \emph{quotient} left $A$-module $\dag$-coalgebras $C$, i.e.\ the quotient map $\pi_C: A \twoheadrightarrow C$ is $\dag$-preserving. Now $B = {}^CA$ will be a unital $*$-subalgebra of $A$: If $b\in B$, then 
\[
\pi_C(b_{(1)}^*)\otimes b_{(2)}^* = \pi_C(S_A^{-1}(b_{(1)}))^{\dag} \otimes b_{(2)}^* = \pi_C(S_A^{-1}(b_{(2)})b_{(1)})^{\dag} \otimes b_{(3)}^* = \pi_C(1_A)\otimes b^*, 
\]
where in the second equality we used $b_{(1)}\in B$. It follows in particular that $B$ is a right $A$-comodule $*$-algebra. 

\begin{Def}\label{DefCQGCoid}
A unitary Doi-Koppinen datum $(A,B,C,\pi_C)$ is of \emph{coideal type} if it comes from a quotient $A$-module $\dag$-coalgebra $\pi_C: A \twoheadrightarrow C$ with $B = {}^CA$. If moreover $A$ is a CQG Hopf $*$-algebra, we say that $(A,B,C,\pi_C)$ is a CQG Doi-Koppinen datum of coideal type.
\end{Def}

\begin{Prop}\label{PropUnifBound}
If $(A,B,C,\pi_C)$ is a CQG Doi-Koppinen datum of coideal type, the following holds: 
\begin{itemize}
\item $B$ is strongly uniformly C$^*$-bounded and C$^*$-faithful.
\item $C$ is a C$^{\dag}$-coalgebra.
\end{itemize}
\end{Prop}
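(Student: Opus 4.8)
The plan is to treat the second bullet before the first, since the C$^{\dag}$-property of $C$ supplies the comodule-theoretic input for $B$.

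\textbf{$C$ is a C$^{\dag}$-coalgebra.} Since $A$ is a CQG Hopf $*$-algebra, its underlying $\dag$-coalgebra is a C$^{\dag}$-coalgebra, so $A$ is spanned by matrix coefficients of finite-dimensional unitary right $A$-comodules $(\Hsp_{\beta},\delta_{\beta})$. I would push each such comodule forward along the $\dag$-preserving surjection $\pi_C$, equipping $\Hsp_{\beta}$ with the structure $\delta_{\beta}^{C}:=(\id\otimes\pi_C)\delta_{\beta}$; this is again a right $C$-comodule because $\pi_C$ is a coalgebra map with $\varepsilon_C\circ\pi_C=\varepsilon_A$. The essential point is that $\delta_{\beta}^{C}$ is \emph{unitary}: applying $\pi_C$ to identity \eqref{EqLeftRightSame} for $\delta_{\beta}$ yields the same identity for $\delta_{\beta}^{C}$, using $\pi_C(v_{(1)}^{\dag})=\pi_C(v_{(1)})^{\dag}$. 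The matrix coefficients of $(\Hsp_{\beta},\delta_{\beta}^{C})$ are the $\pi_C$-images of those of $(\Hsp_{\beta},\delta_{\beta})$, so as $\beta$ varies they span $\pi_C(A)=C$. Hence $C$ coincides with its space of matrix coefficients of unitary comodules, i.e.\ it is a C$^{\dag}$-coalgebra.

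\textbf{$B$ is C$^*$-faithful.} This is immediate by restriction: for $0\neq b\in B\subseteq A$, C$^*$-faithfulness of $A$ gives a Hilbert space $*$-representation $\theta$ of $A$ with $\theta(b)\neq0$, and $\theta|_B$ is a $*$-representation of $B$ not killing $b$.

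\textbf{$B$ is strongly uniformly C$^*$-bounded.} One cannot argue by inheritance from $A$: an arbitrary $*$-subalgebra of a strongly uniformly C$^*$-bounded algebra need not share the property, so the coideal structure must be used. Given $b\in B$, the right coideal condition $\Delta_A(B)\subseteq B\otimes A$ makes $B$ a right $A$-comodule, whence $b$ lies in a finite-dimensional subcomodule $V\subseteq B$. As $A$ is a CQG Hopf $*$-algebra, $V$ is unitarizable, so I may choose a basis $\{v_1,\dots,v_n\}$ of $V$ for which $\Delta_A(v_i)=\sum_j v_j\otimes t_{ji}$ with $t=(t_{ij})\in M_n(A)$ unitary, equivalently $t_{ji}^{*}=S_A(t_{ij})$. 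Using $t_{ki}^{*}=S_A(t_{ik})$ together with the antipode identity $\sum_i t_{ji}S_A(t_{ik})=\varepsilon_A(t_{jk})1=\delta_{jk}1$, one computes
\[
\Delta_A\!\Big(\sum_i v_i v_i^{*}\Big)=\sum_{j,k} v_j v_k^{*}\otimes\Big(\sum_i t_{ji}t_{ki}^{*}\Big)=\Big(\sum_j v_j v_j^{*}\Big)\otimes 1 .
\]
Applying $(\varepsilon_A\otimes\id)$ gives $\sum_i v_i v_i^{*}=\lambda\,1$ with $\lambda=\sum_i|\varepsilon_A(v_i)|^{2}\geq0$, an identity holding inside $B$. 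Therefore, in \emph{any} $*$-representation $\rho$ of $B$ on a pre-Hilbert space, $\sum_i\rho(v_i)\rho(v_i^{*})=\lambda\,\id$; since each summand is a positive operator, $\|\rho(v_i^{*})\xi\|^{2}\leq\lambda\|\xi\|^{2}$, so $\rho(v_i^{*})$ and hence its formal adjoint $\rho(v_i)$ are bounded by $\sqrt{\lambda}$. As $b$ is a linear combination of the $v_i$, this bounds $\rho(b)$ uniformly over all $*$-representations, proving that every (pre-)Hilbert $*$-representation of $B$ is bounded with a uniform bound.

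\textbf{Main obstacle.} The crux is the boundedness statement. The naive inheritance from $A$ fails, so the real content is the scalar relation $\sum_i v_i v_i^{*}\in\C\,1$, which is precisely unitarity of corepresentations married to the coideal property. The points demanding care are the unitarizability of the finite-dimensional comodule $V$ lying inside $B$ (supplied by the CQG structure, i.e.\ the faithful invariant state $\Phi_A$) and the bookkeeping of the antipode and $*$ relations for the matrix $t$; granting these, the rest is formal.
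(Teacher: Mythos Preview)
Your proof is correct and follows essentially the same route as the paper. For the C$^*$-boundedness of $B$, both arguments pick a finite-dimensional subcomodule $V\subseteq B$ containing $b$, produce a basis making the coefficient matrix unitary (the paper does this by taking an orthonormal basis for the $\Phi_A$-inner product, you by invoking unitarizability---the same content), observe that $\sum_i v_iv_i^*$ is a scalar via $\Delta(c)=c\otimes 1$, and conclude the uniform bound; your write-up just makes the antipode/counit bookkeeping explicit. For $C$ being a C$^{\dag}$-coalgebra the paper simply cites \cite{Chi18}, whereas you supply the short direct argument (push forward unitary $A$-comodules along the $\dag$-preserving surjection $\pi_C$), which is exactly the argument behind that reference.
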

\begin{proof}
Let $b\in B$. Then $b$ lies in a finite-dimensional subcomodule $V \subseteq B$. Choosing an orthonormal basis $e_i \in V$ with respect to the inner product $\langle v,w\rangle = \Phi_A(v^*w)$, we find that $c =  \sum_i e_ie_i^*$ satisfies $\Delta(c) = c\otimes 1$, hence $c$ is a (positive) scalar. Then in any pre-Hilbert space $*$-representation $(\Hsp_0,\pi_0)$, we must have that each $\pi_0(e_i^*)$ is bounded, with norm $\leq c^{1/2}$. But then clearly also $\|\pi_0(b)\|\leq C_b$ for some $\pi_0$-independent constant $C_b$. 

The C$^*$-faithfulness of $B$ is immediate since $B \subseteq A$ and $A$ embeds (say) in its universal C$^*$-envelope. 

The fact that $C$ is a $C^{\dag}$-coalgebra is proven in \cite{Chi18}.
\end{proof} 


\section{Drinfeld doubles}\label{SecDrinfDoub}

In the following, we fix a CQG Doi-Koppinen datum $(A,B,C,\pi_C)$ of coideal type as in Definition \ref{DefCQGCoid}.

Consider the restricted and full dual 
\[
\mcI \subseteq \msI = \Lin_{\C}(C,\C).
\] 
If $\Hsp_0$ is a unitary Doi-Koppinen module, we obtain in particular a $*$-representation $\hat{\pi}_0$ of $\msI$ on $\Hsp_0$. It interacts with the $*$-representation $\pi_0$ of $B$ as follows: 
\begin{equation}\label{EqCorrRepToMake}
\hat{\pi}_0(x)\pi_0(b)v = \tau(b_{(1)}v_{(1)},x) \pi_0(b_{(0)})v_{(0)} = \pi_0(b_{(0)})\hat{\pi}_0(x\lhd b_{(1)})v,\qquad x\in \msI,b\in B,v\in V,
\end{equation}
where 
\[
\tau(c,x\lhd a) = \tau(ac,x),\qquad x\in \msI,a\in A,c\in C. 
\]
Note that $\lhd$ is simply the natural right $A$-module structure on the linear dual $\msI$ of $C$. 

This leads us to make the following definition, introduced in \cite{DCDz24}. It is a direct modification of \cite{Tak80,CMS97} to the case of Hopf $*$-algebras. 
\begin{Def}
We define $\msD(B,\msI)$ as the unital $*$-algebra generated by $B$ and $\msI$ with interchange relation 
\[
xb  = b_{(0)}(x\lhd b_{(1)}),\qquad x\in \msI,b\in B. 
\]
\end{Def}
It is easily seen that we then also have 
\[
bx =(x\lhd S_A^{-1}(b_{(1)})) b_{(0)},\qquad x\in \msI,b\in B,
\]
and that the following multiplication maps are bijective: 
\[
B\otimes \msI \rightarrow \msD(B,\msI),\quad b\otimes x\mapsto bx,\qquad 
\msI\otimes B \rightarrow \msD(B,\msI),\quad x\otimes b\mapsto xb. 
\]

Now using that 
\[
(xy)\lhd a = (x\lhd a_{(1)})(y\lhd a_{(2)}),\qquad x,y\in \msI,a\in A,
\]
we see that 
\[
z(x\lhd a)y = ((z\lhd S_A(a_{(1)}))x(y\lhd S_A^{-1}(a_{(3)})))\lhd a_{(2)},\qquad a\in A,x,y,z\in\msI. 
\]
It follows that $\mcI$ is stable under $\lhd A$, and so we obtain the following result. 
\begin{Prop}\label{PropDefReducedDual}
Define $\mcD(B,\mcI) := B\mcI \subseteq \msD(B,\msI)$. Then $\mcD(B,\mcI)$ is a $*$-subalgebra, and moreover the following multiplication maps are bijective: 
\[
B\otimes \mcI \rightarrow \mcD(B,\mcI),\quad b\otimes x\mapsto bx,\qquad 
\mcI\otimes B \rightarrow \mcD(B,\mcI),\quad x\otimes b\mapsto xb. 
\]
\end{Prop}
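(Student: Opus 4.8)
The plan is to deduce everything from four facts already in place: (i) the stability of $\mcI$ under the right action $\lhd A$, isolated in the discussion immediately preceding the statement; (ii) that $\mcI$ is a $*$-subalgebra of $\msI$ (indeed an ideal, hence closed under the convolution product and under $*$); (iii) that $B$ is a right coideal $*$-subalgebra of $A$, i.e.\ $\Delta_A(B)\subseteq B\otimes A$ and $B^*=B$; and (iv) the already-established bijectivity of the two multiplication maps out of $B\otimes\msI$ and $\msI\otimes B$ onto $\msD(B,\msI)$.

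The first step I would carry out is the two-sided equality $\mcI B = B\mcI$. For $x\in\mcI$ and $b\in B$, the interchange relation gives $xb = b_{(0)}(x\lhd b_{(1)})$; here $b_{(0)}\in B$ by (iii) and $x\lhd b_{(1)}\in\mcI$ by (i), so $xb\in B\mcI$ and thus $\mcI B\subseteq B\mcI$. For the reverse inclusion I use the companion relation $bx = (x\lhd S_A^{-1}(b_{(1)}))b_{(0)}$, which places $bx$ in $\mcI B$ for the same two reasons. Hence $\mcI B = B\mcI = \mcD(B,\mcI)$.

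Next I would verify that $\mcD(B,\mcI)=B\mcI$ is a $*$-subalgebra. For closure under multiplication, for $b_i\in B$ and $x_i\in\mcI$ the interchange relation yields
\[
(b_1x_1)(b_2x_2) = b_1(x_1 b_2)x_2 = b_1(b_2)_{(0)}\,\bigl(x_1\lhd (b_2)_{(1)}\bigr)\,x_2,
\]
where $b_1(b_2)_{(0)}\in B$ and $\bigl(x_1\lhd (b_2)_{(1)}\bigr)x_2\in\mcI$ (again by (i) together with closure of $\mcI$ under the convolution product), so the product lies in $B\mcI$. For closure under $*$, we have $(bx)^* = x^*b^*$ with $x^*\in\mcI$ and $b^*\in B$, so $(bx)^*\in\mcI B = B\mcI$ by the first step. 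Finally, for bijectivity: both maps are the restrictions of the bijective maps $B\otimes\msI\to\msD(B,\msI)$ and $\msI\otimes B\to\msD(B,\msI)$ along $\mcI\hookrightarrow\msI$, and since tensoring with $B$ over the field $\C$ preserves injectivity, each restriction is injective; the image of $b\otimes x\mapsto bx$ is by definition $B\mcI=\mcD(B,\mcI)$, while the image of $x\otimes b\mapsto xb$ is $\mcI B$, which equals $\mcD(B,\mcI)$ by the first step, so both are bijective onto $\mcD(B,\mcI)$.

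I do not expect a serious obstacle: the whole argument is bookkeeping layered on top of the $\lhd A$-stability of $\mcI$, which is exactly the content extracted in the paragraph before the statement. The one point needing a little care is to confirm that the image of each restricted multiplication map is genuinely all of $\mcD(B,\mcI)$ and that $\mcI B$ and $B\mcI$ really coincide; both reduce to applying the two interchange relations in opposite directions, as above.
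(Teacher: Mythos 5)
Your proof is correct and follows exactly the route the paper intends: the paper states this proposition as an immediate consequence of the $\lhd A$-stability of $\mcI$ established in the preceding paragraph, together with the interchange relations and the bijectivity of the multiplication maps for $\msD(B,\msI)$, and offers no further argument. Your write-up simply makes that bookkeeping explicit, and all the individual steps (the equality $\mcI B = B\mcI$ via the two interchange relations, closure under product and $*$, and injectivity by restriction of the already-bijective maps) check out.
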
 

The following theorem states that $\mcD(B,\mcI)$ governs the theory of Doi-Koppinen modules. 

\begin{Theorem}\label{TheoOtherOneOne}
If $\Hsp_0$ is a unitary Doi-Koppinen module, then the completion $\Hsp$ of $\Hsp_0$ carries a non-degenerate $*$-representation of $\mcD(B,\mcI)$ by 
\begin{equation}\label{EqDoiKop}
\pi_{\mcD}(bx) := \pi(b)\hat{\pi}(x),\qquad b\in B,x\in \mcI.
\end{equation}
Conversely, if $(\Hsp,\pi_{\mcD})$ is a non-degenerate $*$-representation of $\mcD(B,\mcI)$, then $\Hsp_0 := \pi_{\mcD}(\mcI) \Hsp$ carries a unique structure of unitary Doi-Koppinen module such that \eqref{EqDoiKop} holds. 

The above sets up a one-to-one correspondence between locally complete unitary Doi-Koppinen modules and non-degenerate $*$-representations of $\mcD(B,\mcI)$.
\end{Theorem}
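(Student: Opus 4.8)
The plan is to build the correspondence on top of the comodule/representation duality of Proposition \ref{PropUnitComodEqui}, grafting the $B$-action onto it. Throughout I exploit that both $B$ (by Proposition \ref{PropUnifBound}) and $\mcI$ (by Proposition \ref{PropUnitComodEqui}) are strongly uniformly C$^*$-bounded, so that every $*$-representation appearing is automatically bounded and extends uniquely to the relevant Hilbert space completion. The bijectivity of the multiplication maps in Proposition \ref{PropDefReducedDual}, giving $\mcD(B,\mcI) = B\mcI = \mcI B$, will be the algebraic backbone.

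\emph{Forward direction.} Given a unitary Doi-Koppinen module $\Hsp_0$, I first apply Proposition \ref{PropUnitComodEqui} to pass from the unitary $C$-comodule $(\Hsp_0,\delta)$, with its associated $*$-representation $\hat{\pi}_0$ of $\mcI$, to a non-degenerate $*$-representation $\hat{\pi}$ of $\mcI$ on the completion $\Hsp$ with $\hat{\pi}(\mcI)\Hsp = \Hsp_0$. Since $B$ is strongly uniformly C$^*$-bounded, $\pi_0$ is bounded and extends to a $*$-representation $\pi$ of $B$ on $\Hsp$. I then set $\pi_{\mcD}(bx) := \pi(b)\hat{\pi}(x)$, which is well defined by the bijectivity of $B\otimes\mcI \to \mcD(B,\mcI)$. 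The one point needing care is multiplicativity: via the interchange relation $xb = b_{(0)}(x\lhd b_{(1)})$ it reduces to the operator identity $\hat{\pi}(x)\pi(b) = \pi(b_{(0)})\hat{\pi}(x\lhd b_{(1)})$, which holds on the dense subspace $\Hsp_0$ by \eqref{EqCorrRepToMake} and, since all operators involved are bounded (using that $x\lhd b_{(1)}\in\mcI$, as $\mcI$ is stable under $\lhd A$), extends to all of $\Hsp$. The $*$-property is inherited from $\pi$ and $\hat{\pi}$, and non-degeneracy follows since $\pi_{\mcD}(\mcI)\Hsp = \hat{\pi}(\mcI)\Hsp$ is dense.

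\emph{Backward direction.} Given a non-degenerate $*$-representation $(\Hsp,\pi_{\mcD})$, I restrict to a bounded $*$-representation $\pi := \pi_{\mcD}|_B$ and a $*$-representation $\hat{\pi} := \pi_{\mcD}|_{\mcI}$; writing $\mcD(B,\mcI) = \mcI B$ shows $\pi_{\mcD}(\mcD)\Hsp \subseteq \hat{\pi}(\mcI)\Hsp$, so the latter is dense and $\hat{\pi}$ is non-degenerate. Proposition \ref{PropUnitComodEqui} then equips $\Hsp_0 := \pi_{\mcD}(\mcI)\Hsp = \hat{\pi}(\mcI)\Hsp$ with a locally complete unitary $C$-comodule structure. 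The reversed relation $bx = (x\lhd S_A^{-1}(b_{(1)}))b_{(0)}$ gives $\pi(B)\Hsp_0\subseteq\Hsp_0$, so $\pi$ restricts to a $*$-representation $\pi_0$ of $B$ on $\Hsp_0$. It remains to check the compatibility \eqref{EqCompRelRel}: pairing its $C$-leg against an arbitrary $x\in\mcI$ and unwinding the definition \eqref{EqDefIdMod} of $\hat{\pi}_0$, the identity becomes exactly $\hat{\pi}(x)\pi(b) = \pi(b_{(0)})\hat{\pi}(x\lhd b_{(1)})$, which holds by construction of $\pi_{\mcD}$; since $\mcI \cong \oplus_{\alpha}\End_{\C}(\Hsp_{\alpha})$ separates the points of $C \cong \oplus_{\alpha}\End_{\C}(\Hsp_{\alpha})^{\circ}$, testing against $\mcI$ suffices. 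Uniqueness, and the fact that the two constructions are mutually inverse, reduce to the corresponding statements of Proposition \ref{PropUnitComodEqui} together with the observation that $\pi_0$ is forced to be $\pi_{\mcD}|_B$.

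\emph{Main obstacle.} The technical heart is the passage between the purely algebraic identities, available only on the dense pre-Hilbert subspace $\Hsp_0$, and their operator-level counterparts on the completion $\Hsp$; the enabling fact is the boundedness of every $\pi(b)$ and every $\hat{\pi}(x)$ for $x\in\mcI$, which lets relations verified on $\Hsp_0$ persist on $\Hsp$. The conceptually delicate point is the reconstruction in the backward direction, where one must confirm that the comodule structure produced by Proposition \ref{PropUnitComodEqui} meshes with the restricted $B$-action so as to satisfy \eqref{EqCompRelRel}; it is precisely the dualization of \eqref{EqCompRelRel} against $\mcI$, turning it into the already-available relation \eqref{EqCorrRepToMake}, that makes this step tractable.
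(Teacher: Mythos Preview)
Your proposal is correct and follows essentially the same approach as the paper: both build the correspondence on top of Proposition \ref{PropUnitComodEqui}, use the strong uniform C$^*$-boundedness of $B$ from Proposition \ref{PropUnifBound} to extend $\pi_0$ to $\Hsp$, and reduce the compatibility of the $B$- and $\mcI$-actions to the interchange identity \eqref{EqCorrRepToMake}. Your version simply spells out more of the details the paper leaves implicit (non-degeneracy of $\hat{\pi}$ via $\mcD(B,\mcI)=\mcI B$, the verification of \eqref{EqCompRelRel} by pairing against $\mcI$, and the use of $bx=(x\lhd S_A^{-1}(b_{(1)}))b_{(0)}$ for $B$-stability of $\Hsp_0$).
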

\begin{proof}
We already know from \eqref{PropUnitComodEqui} that this theorem holds on the level of $\mcI$-representations. By Proposition \ref{PropUnifBound}, we also know that indeed a $*$-representation $\pi_0$ of $B$ on a pre-Hilbert space $\Hsp_0$ completes to a bounded $*$-representation on $\Hsp$. 

If then $\Hsp_0$ is a unitary Doi-Koppinen module, it follows directly from \eqref{EqCorrRepToMake} that \eqref{EqDoiKop} is a well-defined $*$-representation of $\mcD(B,\mcI)$ on $\Hsp$. It is clearly non-degenerate, since $\hat{\pi}_0(\mcI)\Hsp_0 = \Hsp_0$. 

Conversely, if $\Hsp$ is a a non-degenerate $*$-representation, it is only left to verify that $\Hsp_0 = \pi_{\mcD}(\mcI)\Hsp$ is stable under $\pi(B)$. But this is again an immediate consequence of \eqref{EqCorrRepToMake}.
\end{proof}

In general, we again as in \eqref{EqEquivCat} have that 
\begin{equation}\label{EqFunctorial}
{}_B\Hilb^C \rightarrow {}_{\mcD(B,\mcI)}\Hilb
\end{equation}
is not an equivalence of $*$-categories. However, if we restrict the left hand side to the category ${}_B\msH\msC^C$ of \emph{unitary Harish-Chandra Doi-Koppinen modules}, we do get an equivalence upon its image, which we then also refer to as (the $*$-category of) Harish-Chandra $*$-representations of $\mcD(B,\mcI)$. 

We similarly transport the notion of admissibility from Definition \ref{DefUnitDrinfMod} to the setting of $*$-representations of $\mcD(B,\mcI)$. As to the notion of irreducibility, we have the following lemma. Its proof is immediate by the fact that \eqref{EqFunctorial} respects and reflects isometric inclusions.

\begin{Lem}
A locally complete unitary Doi-Koppinen module $\Hsp_0$ is irreducible if and only if $\Hsp$ is irreducible as a $\mcD(B,\mcI)$-representation.
\end{Lem}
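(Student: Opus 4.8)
The plan is to upgrade the one-to-one correspondence of Theorem \ref{TheoOtherOneOne} to an order-isomorphism of subobject lattices, and then read off the equivalence of the two irreducibility notions as a formal consequence. Explicitly, I would show that the poset of $B$-stable locally complete unitary $C$-subcomodules $\mcK_0 \subseteq \Hsp_0$ is isomorphic, as an ordered set, to the poset of closed $\mcD(B,\mcI)$-invariant subspaces $\mcK \subseteq \Hsp$, via the mutually inverse, inclusion-preserving assignments $\mcK_0 \mapsto \mcK := \overline{\mcK_0}$ and $\mcK \mapsto \mcK_0 := \pi_{\mcD}(\mcI)\mcK$. Once this is in place, irreducibility of $\Hsp_0$ says exactly that the left-hand poset has no non-trivial element, and irreducibility of $\Hsp$ says the same for the right-hand poset, so the two conditions coincide.

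First I would treat the forward assignment. A $B$-stable locally complete unitary $C$-subcomodule $\mcK_0$ is by Definition \ref{DefUnitDrinfMod} itself a locally complete unitary Doi-Koppinen module, and the inclusion $\mcK_0 \hookrightarrow \Hsp_0$ is an isometric intertwiner, hence a morphism in ${}_B\Hilb^C$. Applying the functor \eqref{EqFunctorial}, which respects isometric inclusions, its completion $\mcK = \overline{\mcK_0}$ embeds isometrically and $\mcD(B,\mcI)$-equivariantly into $\Hsp$, so $\mcK$ is a closed invariant subspace. For the backward assignment, given a closed $\mcD(B,\mcI)$-invariant subspace $\mcK$, restricting $\pi_{\mcD}$ makes $\mcK$ a $*$-representation of $\mcD(B,\mcI)$; it is non-degenerate because the $\mcI$-action on $\Hsp$ is non-degenerate (as $\Hsp_0 = \pi_{\mcD}(\mcI)\Hsp$ is dense, cf.\ Proposition \ref{PropUnitComodEqui} and \eqref{EqLocCompFin}), and restriction of a non-degenerate representation of the direct sum of matrix algebras $\mcI$ to an invariant subspace stays non-degenerate. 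Theorem \ref{TheoOtherOneOne} then produces the locally complete unitary Doi-Koppinen module $\mcK_0 = \pi_{\mcD}(\mcI)\mcK \subseteq \pi_{\mcD}(\mcI)\Hsp = \Hsp_0$.

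The hard part — really the only content beyond bookkeeping — is to check that this $\mcK_0$ sits inside $\Hsp_0$ genuinely as a $B$-stable unitary $C$-subcomodule, rather than merely being abstractly isomorphic to one, and that it is locally complete. This is exactly where I would invoke that \eqref{EqFunctorial} \emph{reflects} isometric inclusions: the isometric $\mcD(B,\mcI)$-equivariant inclusion $\mcK \hookrightarrow \Hsp$ is pulled back to an isometric morphism $\mcK_0 \hookrightarrow \Hsp_0$ in ${}_B\Hilb^C$, forcing $\mcK_0$ to be a $B$-stable locally complete unitary $C$-subcomodule. Finally I would verify the two assignments are mutually inverse: $\overline{\pi_{\mcD}(\mcI)\mcK} = \mcK$ by the non-degeneracy just noted, and $\pi_{\mcD}(\mcI)\overline{\mcK_0} = \mcK_0$ is precisely the defining relation $\Hsp_0 = \pi_{\mcD}(\mcI)\Hsp$ of \eqref{EqLocCompFin} applied to the subobject. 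Since both assignments manifestly preserve inclusions and send the trivial (resp.\ full) object to the trivial (resp.\ full) object, the absence of non-trivial subobjects transfers across, completing the proof.
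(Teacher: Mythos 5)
Your argument is correct and is exactly the one the paper intends: the proof given there consists of the single remark that the functor \eqref{EqFunctorial} respects and reflects isometric inclusions, which is precisely the order-isomorphism of subobject lattices you spell out. Your write-up simply makes explicit the details the paper declares immediate.
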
 

\begin{Prop}\label{PropUnivEnve}
There exists a universal C$^*$-envelope $C^*(\mcD(B,\mcI))$ of $\mcD(B,\mcI)$, into which $\mcD(B,\mcI)$ embeds.  
\end{Prop}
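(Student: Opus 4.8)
The plan is to verify the two properties that make a $*$-algebra admit a universal C$^*$-envelope into which it embeds: that $\mcD(B,\mcI)$ is uniformly C$^*$-bounded, so that the universal C$^*$-seminorm is finite, and that it is C$^*$-faithful, so that this seminorm is in fact a norm. Concretely, once uniform C$^*$-boundedness is known, one sets $p(d) = \sup_\pi \|\pi(d)\|$, the supremum ranging over all $*$-representations $\pi$ of $\mcD(B,\mcI)$ on Hilbert spaces (each of which is automatically bounded). Then $p$ is a C$^*$-seminorm, the Hausdorff completion of $\mcD(B,\mcI)$ for $p$ is the desired C$^*$-algebra $C^*(\mcD(B,\mcI))$ with its universal property, and $\mcD(B,\mcI)$ embeds precisely when $\ker p = 0$, i.e.\ when $\mcD(B,\mcI)$ is C$^*$-faithful.

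For uniform C$^*$-boundedness, I would first reduce to non-degenerate representations: any $*$-representation of $\mcD(B,\mcI)$ on $\Hsp$ splits orthogonally as its essential part $\overline{\pi(\mcD(B,\mcI))\Hsp}$, on which it is non-degenerate, and a complement on which every element acts as $0$, so $p(d)$ is computed on non-degenerate representations alone. By Theorem \ref{TheoOtherOneOne} these come from unitary Doi-Koppinen modules and therefore factor as $\pi_{\mcD}(bx) = \pi_0(b)\hat\pi_0(x)$. Writing a general element as a finite sum $d = \sum_i b_i x_i$ with $b_i \in B$, $x_i \in \mcI$ (Proposition \ref{PropDefReducedDual}), the triangle inequality gives $\|\pi_{\mcD}(d)\| \le \sum_i \|\pi_0(b_i)\|\,\|\hat\pi_0(x_i)\|$, and each factor is bounded uniformly in $\pi$ because $B$ is strongly uniformly C$^*$-bounded (Proposition \ref{PropUnifBound}) and $\mcI$ is strongly uniformly C$^*$-bounded (Proposition \ref{PropUnitComodEqui}). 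Hence $p(d) < \infty$.

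C$^*$-faithfulness is the crux, and here I would exhibit an explicit faithful unitary Doi-Koppinen module: the \emph{regular} module, with underlying pre-Hilbert space $A$ equipped with the inner product $\langle a, a'\rangle = \Phi_A(a^*a')$, right $C$-comodule structure $\delta(a) = a_{(1)} \otimes \pi_C(a_{(2)})$, and left $B$-action by left multiplication. That left multiplication by $B$ is a $*$-representation is immediate from invariance of $\Phi_A$; that the comodule is unitary in the sense of \eqref{EqLeftRightSame} follows by applying $\pi_C$ to the second leg of the corresponding identity for the regular $A$-comodule (a standard CQG fact), using that $\pi_C$ is $\dag$-preserving; and the Doi-Koppinen compatibility \eqref{EqCompRelRel} reduces to $\pi_C$ being a morphism of left $A$-modules. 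The associated representation of $\mcD(B,\mcI)$ is $\pi_{\mcD}(bx)a = b\,\bigl(a_{(1)}\,\tau(\pi_C(a_{(2)}),x)\bigr)$. One first checks that $\hat\pi_0$ is injective on $\mcI$: if $a_{(1)}\tau(\pi_C(a_{(2)}),x) = 0$ for all $a$, then applying $\varepsilon_A$ to the first leg and using surjectivity of $\pi_C$ together with non-degeneracy of the pairing $\mcI \times C \to \C$ forces $x = 0$.

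It then remains to upgrade this to \emph{joint} faithfulness of $\pi_{\mcD}(d) = \sum_i \pi_0(b_i)\hat\pi_0(x_i)$, and this is the step I expect to be the main obstacle. The difficulty is that left multiplication by $B$ and right convolution by $\mcI$ do \emph{not} commute on $A$ --- their failure to commute is exactly the interchange relation defining the Drinfeld double --- so one cannot naively separate the two tensor legs of $\mcD(B,\mcI) \cong B \otimes \mcI$. I would resolve this by decomposing $A$ into its $C$-isotypical components under $\delta$, which is possible since $C$ is cosemisimple (Proposition \ref{PropUnifBound}) and $A$ is cosemisimple as a CQG algebra; on each component the matrix units of $\mcI \cong \oplus_\alpha \End_\C(\Hsp_\alpha)$ act as genuine matrix units, so that the vanishing of $\sum_i \pi_0(b_i)\hat\pi_0(x_i)$ collapses, block by block, to finitely many identities among the left multiplications $L_{b_i}$ on $A$. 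Since left multiplication $B \to \End_\C(A)$ is injective and the multiplication map $B \otimes \mcI \to \mcD(B,\mcI)$ is bijective (Proposition \ref{PropDefReducedDual}), with faithful flatness of $A$ over $B$ (Theorem \ref{TheoTakeu}) controlling the $B$-linear dependencies, these identities force all $b_i$ to vanish, hence $d = 0$. This yields $\ker p = 0$ and the asserted embedding.
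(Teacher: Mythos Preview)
Your overall strategy matches the paper's exactly: show uniform C$^*$-boundedness from that of $B$ and $\mcI$, then establish C$^*$-faithfulness by exhibiting the regular Doi-Koppinen module on $(A,\Phi_A)$. You correctly identify this module and correctly locate the crux at the ``joint faithfulness'' of $\pi_{\mcD}$.

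However, the resolution you propose for that crux is far too elaborate, and too sketchy to count as a proof --- the invocation of isotypical decompositions and faithful flatness is not made precise, and it is not clear how those tools would actually separate the $B$- and $\mcI$-legs. The paper dispatches this step with a one-line Hopf-algebra trick that you are missing: from
\[
\sum_i b_i\, a_{(1)}\,\tau(\pi_C(a_{(2)}),x_i) = 0 \quad \text{for all } a\in A,
\]
simply multiply on the right by an arbitrary $a'\in A$. Since scalars commute, this gives
\[
\sum_i b_i\, a_{(1)}a'\,\tau(\pi_C(a_{(2)}),x_i) = 0 \quad \text{for all } a,a'\in A.
\]
Now use the standard Hopf-algebra identity $\Delta_A(A)(A\otimes 1) = A\otimes A$ (the map $a\otimes a' \mapsto a_{(1)}a'\otimes a_{(2)}$ is bijective), so that $a_{(1)}a'\otimes a_{(2)}$ ranges over all of $A\otimes A$. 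Combined with surjectivity of $\pi_C$, this yields $\sum_i b_i\,a\,\tau(c,x_i)=0$ for all $a\in A$, $c\in C$, and hence $\sum_i b_i\otimes x_i = 0$ in $B\otimes\mcI$ by non-degeneracy of the pairing. No cosemisimplicity, no isotypical components, no faithful flatness are needed.
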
 
\begin{proof}
It is sufficient to prove that $\mcD(B,\mcI)$ is uniformally C$^*$-bounded and C$^*$-separated. 

The uniform C$^*$-boundedness of $\mcD(B,\mcI)$ is immediate, since both $B$ and $\mcI$ are (strongly) uniformly C$^*$-bounded. 

To see that $\mcD(B,\mcI)$ is C$^*$-separated, we check that $\mcD(B,\mcI)$ has a faithful $*$-representation on the pre-Hilbert space $A$ with inner product 
\[
\langle a,b\rangle = \Phi_A(a^*b),\qquad a,b\in A.
\]
Indeed, consider on $A$ the unitary Doi-Koppinen module structure 
\[
\pi_A(b)a  = ba,\qquad \delta_A(a) = (\id\otimes \pi_C)\Delta_A(a),\qquad a\in A,b\in B.
\]
To verify that the associated $*$-representation $\pi_{\mcD}$ of $\mcD(B,\mcI)$ is faithful, consider finitely many $b_i \in B$ and $\omega_i \in \mcI$ with 
\begin{equation}\label{EqGiveZeroAlways}
\sum_i b_i (\id\otimes \omega_i)\delta_A(a)=0,\qquad \forall a\in A,
\end{equation}
Then since $\Delta_A(A)(A\otimes 1) = A\otimes A$, multiplying \eqref{EqGiveZeroAlways} on the right with an arbitrary $a'\in A$ shows that 
\[
\sum_i b_i a \omega_i(c) = 0,\qquad \forall a\in A,c\in C,
\]
hence $\sum_i b_i \otimes \omega_i  =0$. This implies the faithfulness of $\pi_{\mcD}$. 
\end{proof}


\begin{Theorem}
Assume every irreducible $\mcD(B,\mcI)$-representation is admissible. Then $C^*(\mcD(B,\mcI))$ is type $I$. 
\end{Theorem}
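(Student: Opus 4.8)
The plan is to verify the standard characterization of type $I$ (postliminal) C$^*$-algebras: a C$^*$-algebra is type $I$ precisely when, for every irreducible $*$-representation $\pi$ on a Hilbert space $\Hsp_\pi$, the image $\pi(C^*(\mcD(B,\mcI)))$ contains the ideal $\mathcal{K}(\Hsp_\pi)$ of compact operators. So I fix an arbitrary irreducible representation $\pi$ of $C^*(\mcD(B,\mcI))$ and aim to exhibit a nonzero compact operator in its image; producing the compacts at one such operator will then come for free.

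First I would pass to the dense subalgebra. Since $\mcD(B,\mcI)$ is uniformly C$^*$-bounded and embeds densely into its universal envelope (Proposition \ref{PropUnivEnve}), restricting $\pi$ yields an irreducible non-degenerate $*$-representation of $\mcD(B,\mcI)$ on $\Hsp_\pi$ (non-degeneracy being automatic, as the essential subspace is invariant). By Theorem \ref{TheoOtherOneOne} together with the irreducibility lemma, this corresponds to an irreducible locally complete unitary Doi-Koppinen module $\Hsp_0$, dense in $\Hsp_\pi$. The hypothesis that every irreducible $\mcD(B,\mcI)$-representation is admissible says exactly that this $\Hsp_0$ is locally finite.

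The key step is to convert local finiteness into a finite-rank statement about the action of $\mcI$. By Proposition \ref{PropUnifBound} the C$^\dag$-coalgebra $C$ decomposes as $C \cong \oplus_{\alpha\in\mfI}\End_\C(\Hsp_\alpha)^{\circ}$, so that $\mcI \cong \oplus_{\alpha}\End_\C(\Hsp_\alpha)$ carries the minimal central projections $p_\alpha$ (the units of the matrix blocks). Since every finite-dimensional subcoalgebra of $C$ is contained in a finite block sum $D_F = \oplus_{\alpha\in F}\End_\C(\Hsp_\alpha)^{\circ}$, local finiteness of $\Hsp_0$ is equivalent to finite-dimensionality of each $\Hsp_{D_F}$; and unwinding \eqref{EqDefIdMod} one checks $\Hsp_{D_F} = \oplus_{\alpha\in F}\hat{\pi}(p_\alpha)\Hsp_\pi$, so this amounts to each $\hat{\pi}(p_\alpha)$ being a finite-rank projection. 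As $\Hsp_\pi\neq 0$ and $\Hsp_0 = \hat{\pi}(\mcI)\Hsp_\pi = \bigoplus_\alpha \hat{\pi}(p_\alpha)\Hsp_\pi$ is dense, at least one $\hat{\pi}(p_\alpha)$ is a nonzero finite-rank, hence compact, operator lying in $\pi(C^*(\mcD(B,\mcI)))$.

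To finish, I invoke the classical fact that a C$^*$-algebra acting irreducibly on a Hilbert space and containing a single nonzero compact operator must contain all of $\mathcal{K}(\Hsp_\pi)$. Applied to the irreducible image $\pi(C^*(\mcD(B,\mcI)))$ this gives $\mathcal{K}(\Hsp_\pi)\subseteq \pi(C^*(\mcD(B,\mcI)))$, and since $\pi$ was an arbitrary irreducible representation, $C^*(\mcD(B,\mcI))$ is type $I$. I expect the main obstacle to be the bookkeeping of the third paragraph---confirming that the finite block sums $D_F$ are cofinal among finite-dimensional subcoalgebras and that $\Hsp_{D_F}$ really coincides with the range of $\sum_{\alpha\in F}\hat{\pi}(p_\alpha)$---whereas the enveloping C$^*$-algebraic facts (the type $I$ characterization and the compact-operator rigidity of irreducible algebras) are entirely standard.
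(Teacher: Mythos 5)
Your argument is correct and is essentially the paper's proof: the paper likewise reduces to exhibiting a nonzero compact operator in the image of each irreducible representation and obtains it from admissibility, noting that any $x\in\mcI$ with $\pi(x)\Hsp\neq\{0\}$ acts as a nonzero finite-rank operator (your minimal central projections $p_\alpha$ being a particular instance). Your extra bookkeeping with the block decomposition of $\mcI$ and the rigidity of irreducible algebras containing a compact just makes explicit what the paper leaves implicit.
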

\begin{proof}
It is sufficient to prove that every irreducible $*$-representation $\pi$ of $C^*(\mcD(B,\mcI))$ contains a compact operator (which may be taken as the definition of being type $I$). But by admissibility, any element $x\in \mcI$ with $\pi(x)\Hsp\neq \{0\}$ will give a non-zero finite rank operator $\pi(x) \in \pi(\mcD(B,\mcI))$. 
\end{proof}

We do not comment here on the monoidal structure that  ${}_{\mcD(B,\mcI)}\Hilb$ possesses. Roughly speaking, it is obtained by upgrading the equivalence of Theorem \ref{TheoTakeu} to the analytic level. See \cite[Section 2.2]{DCDz21} for more details.  


\section{A general construction method}

A common way to construct CQG Hopf $*$-algebras is as follows (on the abstract level, this is really just an instance of the Tannaka-Krein duality \cite{Wor88}). We first introduce the following notion. 

\begin{Def}\label{DefSComp}
Let $U$ be a Hopf $*$-algebra. A finite-dimensional unital $*$-representation $\hat{\pi}: U \rightarrow B(\Hsp_{\hat{\pi}})$ is called \emph{$S_U^2$-compatible} if there exists an invertible positive $T_{\hat{\pi}} \in B(\Hsp_{\hat{\pi}})$ such that 
\[
\hat{\pi}(S^2_U(x)) = T_{\hat{\pi}}\hat{\pi}(x)T_{\hat{\pi}}^{-1},\qquad \forall x\in U.
\] 
\end{Def}

\begin{Prop}\label{PropRepU}
Let $U$ be a Hopf $*$-algebra. Let $\msF = \{\hat{\pi}\}$ be a collection of $S_U^2$-compatible finite-dimensional unital $*$-representations of $U$. Then the unital $*$-algebra $A = A_{\msF} \subseteq \Lin_{\C}(U,\C)$ generated by the matrix coefficients 
\[
U_{\hat{\pi}}(\xi,\eta) \in \Lin(U,\C),\quad x \mapsto \langle \xi,\hat{\pi}(x)\eta\rangle
\]
is a CQG Hopf $*$-algebra for the convolution $*$-algebra structure
\[
\tau(ab,x) = \tau(a\otimes b,\Delta_U(x)),\qquad \tau(a^*,x) = \overline{\tau(a,S_U(x)^*)},\qquad a,b\in A,x\in U,
\]
and with the coproduct uniquely determined by 
\[
\tau(\Delta_A(U_{\hat{\pi}}(\xi,\eta)),x\otimes y) = \langle \xi,\hat{\pi}(xy)\eta\rangle,\qquad \xi,\eta\in \Hsp_{\hat{\pi}}.
\]
\end{Prop}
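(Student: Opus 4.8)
The plan is to identify $A$ with the linear span of matrix coefficients of a class of representations that is closed under tensor products and contragredients, to read off the Hopf $*$-structure from that description, and finally to deduce the CQG property from the characterisations of CQG Hopf $*$-algebras established above (it suffices to exhibit the underlying $\dag$-coalgebra as a direct sum of basic $\dag$-coalgebras). I would first develop the matrix-coefficient calculus. Writing $u^{\hat\pi}_{ij} = U_{\hat\pi}(e_i,e_j)$ for an orthonormal basis $(e_i)$ of $\Hsp_{\hat\pi}$, the defining formula for the coproduct gives $\Delta_A(u^{\hat\pi}_{ij}) = \sum_k u^{\hat\pi}_{ik}\otimes u^{\hat\pi}_{kj}$, the counit is $\varepsilon_A(a) = a(1_U)$, and the convolution product yields $U_{\hat\pi}(\xi,\eta)\,U_{\hat\rho}(\zeta,\omega) = U_{\hat\pi\otimes\hat\rho}(\xi\otimes\zeta,\eta\otimes\omega)$ for the tensor-product representation. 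Because $\Delta_U$ is a $*$-homomorphism and $(S_U^2\otimes S_U^2)\Delta_U = \Delta_U S_U^2$, the representation $\hat\pi\otimes\hat\rho$ is again $S_U^2$-compatible with $T_{\hat\pi\otimes\hat\rho} = T_{\hat\pi}\otimes T_{\hat\rho}$, while the trivial representation (with $T=1$) supplies the unit $1_A = \varepsilon_U$. Hence the span of matrix coefficients of $S_U^2$-compatible representations is a unital sub-bialgebra of $\Lin_\C(U,\C)$.

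Next I would read off the $*$-structure and the antipode. A short computation from $\tau(a^*,x) = \overline{\tau(a,S_U(x)^*)}$, using that $\hat\pi$ is a $*$-representation, gives $U_{\hat\pi}(\xi,\eta)^* = S_A(U_{\hat\pi}(\eta,\xi))$ with $S_A(a) := a\circ S_U$; the antipode identities then hold on generators directly from the antipode axiom of $U$ (evaluated through $\hat\pi$), and $\Delta_A(a^*) = a_{(1)}^*\otimes a_{(2)}^*$ follows since $S_A$ is an anti-coalgebra map. To close the class under $*$ I would use positivity of $T_{\hat\pi}$ to form $T_{\hat\pi}^{1/2}$ and verify that the contragredient $\hat\pi^\vee(x) := T_{\hat\pi}^{1/2}\,\hat\pi(S_U(x))^*\,T_{\hat\pi}^{-1/2}$, acting on the conjugate Hilbert space $\overline{\Hsp_{\hat\pi}}$, is again a finite-dimensional unital $*$-representation, $S_U^2$-compatible with $T_{\hat\pi^\vee} = T_{\hat\pi}^{-1}$, and whose matrix coefficients are exactly the $U_{\hat\pi}(\xi,\eta)^*$. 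Thus $A$ equals the span of matrix coefficients of $S_U^2$-compatible representations and is a Hopf $*$-algebra.

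For the CQG property I would show that each $u^{\hat\pi}$ is a \emph{unitary} corepresentation: from $(u^{\hat\pi}_{ki})^* = S_A(u^{\hat\pi}_{ik})$ and the antipode identities one obtains $\sum_k (u^{\hat\pi}_{ki})^* u^{\hat\pi}_{kj} = \delta_{ij}1 = \sum_k u^{\hat\pi}_{ik}(u^{\hat\pi}_{jk})^*$. Decomposing each of the (genuinely Hilbert-space) representations in our class orthogonally into irreducibles, every simple block of the coalgebra $A$ has the form $\End_\C(\Hsp_\alpha)^\circ$ carrying precisely the canonical $\dag$ of \eqref{EqDualForm}, and inequivalent blocks are linearly independent; so $A\cong\oplus_{\alpha}\End_\C(\Hsp_\alpha)^\circ$ as $\dag$-coalgebras. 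By the equivalence established above, such an $A$ is a CQG Hopf $*$-algebra.

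The main obstacle is precisely this last reduction: passing from the algebraic unitarity of the $u^{\hat\pi}$ to a decomposition of the $\dag$-coalgebra into \emph{basic} $\dag$-coalgebras with the positive-definite pairing \eqref{EqDualForm}. This is where positivity of $T_{\hat\pi}$ is indispensable: it is what makes the contragredients $\hat\pi^\vee$ honest Hilbert-space $*$-representations (so that the $*$-closure of the generators stays within unitary corepresentations), and it thereby forces the $\dag$-structure on every simple summand to be the basic one. Without positivity one would only obtain an indefinite form on some blocks, and $A$ would fail to be a C$^\dag$-coalgebra, i.e.\ fail to be CQG.
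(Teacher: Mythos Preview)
The paper does not supply a proof of this proposition: it is stated as a standard instance of Tannaka--Krein duality, with a reference to \cite{Wor88}, and then used as a black box. Your argument is correct and provides exactly the expected details --- closure of the span of matrix coefficients under product via the tensor product of representations, under $*$ via the $T_{\hat\pi}^{1/2}$-twisted contragredient, and the unitarity relations $(u^{\hat\pi})^* u^{\hat\pi} = 1 = u^{\hat\pi}(u^{\hat\pi})^*$ exhibiting each $\Hsp_{\hat\pi}$ as a unitary $A$-comodule, so that $A$ is a $C^\dag$-coalgebra and hence CQG.

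One small remark on emphasis: the ``last reduction'' you flag as the main obstacle is in fact immediate once you have the unitarity identities for the $u^{\hat\pi}$, and those follow purely from $(u^{\hat\pi}_{ki})^* = S_A(u^{\hat\pi}_{ik})$ together with the antipode axiom --- no appeal to $T_{\hat\pi}$ is needed at that stage. The positivity of $T_{\hat\pi}$ is genuinely used earlier, in the construction of $\hat\pi^\vee$: it is what guarantees that $T_{\hat\pi}^{1/2}$ exists as a positive operator, so that $\hat\pi^\vee$ is an honest $*$-representation and the $*$-closure of $A$ does not leave the span of matrix coefficients of $S_U^2$-compatible $*$-representations.
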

\begin{Def}\label{DefFtype}
Under the assumptions of Proposition \ref{PropRepU}, we call a finite-dimensional unital $*$-representation $\hat{\pi}$ of $U$ \emph{of $\msF$-type} if $U_{\hat{\pi}}(\xi,\eta) \in A$ for each $\xi,\eta\in \Hsp_{\hat{\pi}}$. More generally, we call a $*$-representation $\hat{\pi}_0$ of $U$ on a pre-Hilbert space $\Hsp_0$ \emph{of $\msF$-type} if 
\begin{itemize}
\item $\Hsp_{\xi} := \hat{\pi}_0(U)\xi$ is finite-dimensional for each $\xi\in \Hsp_0$, and
\item the restriction of $\hat{\pi}_0$ to each $\Hsp_{\xi}$ is of $\msF$-type.
\end{itemize}
We call $A$ the Hopf $*$-algebra of $\msF$-type matrix coefficients.
\end{Def}
There is then a one-to-one correspondence between $\msF$-type $*$-representations $(\Hsp_0,\hat{\pi}_0)$ of $U$ and unitary $A$-comodules $(\Hsp_0,\delta_0)$, the correspondence being that 
\begin{equation}\label{EqIndRep}
\hat{\pi}_0(x)\xi = (\id\otimes \tau(-,x))\delta_0(\xi),\qquad x\in U,\xi\in \Hsp_0. 
\end{equation}
Alternatively, if we denote $\msU$ the full dual of $A$, then we obtain a unital $*$-algebra homomorphism
\begin{equation}\label{EqUnivEmb}
U \rightarrow \msU, \quad x \mapsto \tau(-,x) \in \Lin_{\C}(A,\C),
\end{equation}
and it is easily seen that \eqref{EqIndRep} is just the factorisation through this $*$-homomorphism of the $\msU$-representation determined by \eqref{EqDefIdMod}.

The above set-up also gives a convenient way to construct unitary Doi-Koppinen data of coideal type \cite{MS99,Chi18}. 

\begin{Prop}\label{PropSetupCoideal}
Assume that $U$ is a Hopf $*$-algebra, and assume $A = A_{\msF}$ is a CQG Hopf $*$-algebra as above. Assume that $I$ is a left coideal $*$-subalgebra of $U$. Then the coimage $\pi_C:A \twoheadrightarrow C$ of  
\begin{equation}\label{EqQuotientCoalg}
A \rightarrow \Lin_{\C}(I,\C), \qquad a \mapsto \tau(a,-)_{\mid I}
\end{equation}
defines a left $A$-module quotient $\dag$-coalgebra $(C,\pi_C)$ of $A$. Moreover, if $B = {}^CA$, then 
\begin{equation}\label{EqHomSp}
B = \{b\in A\mid \forall x\in I: \tau(b_{(1)},x)b_{(2)} = \varepsilon_U(x)b\}. 
\end{equation}
\end{Prop}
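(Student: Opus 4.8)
The plan is to realize $C$ concretely as the quotient $A/I^{\perp}$, where
\[
I^{\perp} := \{a \in A \mid \tau(a,x) = 0 \text{ for all } x\in I\} = \Ker\big(a\mapsto \tau(a,-)_{\mid I}\big),
\]
and then to verify that $I^{\perp}$ is simultaneously a coideal, a left ideal, and $\dag$-stable, so that the coimage $\pi_C : A\twoheadrightarrow A/I^{\perp}=C$ is precisely a left $A$-module quotient $\dag$-coalgebra. Throughout I use the duality relations of Proposition \ref{PropRepU}: $A$-multiplication is dual to $\Delta_U$, $\Delta_A$ is dual to $U$-multiplication, $\tau(S_A(a),x)=\tau(a,S_U(x))$, $\tau(1_A,x)=\varepsilon_U(x)$, and $\tau(a^*,x)=\overline{\tau(a,S_U(x)^*)}$.

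The three properties of $I^{\perp}$ would be checked as follows. Since $1_U\in I$, any $a\in I^{\perp}$ satisfies $\varepsilon_A(a)=\tau(a,1_U)=0$. Writing $q:A\to A/I^{\perp}$ for the quotient, recall $\Ker(q\otimes q)=I^{\perp}\otimes A+A\otimes I^{\perp}$; for $a\in I^{\perp}$ and $x,y\in I$, using the injection $I^{\circ}\otimes I^{\circ}\hookrightarrow (I\otimes I)^{\circ}$,
\[
\langle (q\otimes q)\Delta_A(a),\,x\otimes y\rangle = \tau(a_{(1)},x)\tau(a_{(2)},y)=\tau(a,xy)=0,
\]
the last step using that $xy\in I$ because $I$ is a \emph{subalgebra}; hence $\Delta_A(a)\in I^{\perp}\otimes A+A\otimes I^{\perp}$ and $I^{\perp}$ is a coideal. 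The \emph{left coideal} property $\Delta_U(I)\subseteq U\otimes I$ then makes $I^{\perp}$ a left ideal: for $a\in A$, $b\in I^{\perp}$, $x\in I$ one has $\tau(ab,x)=\tau(a,x_{(1)})\tau(b,x_{(2)})$ with $x_{(2)}\in I$, so $ab\in I^{\perp}$. Finally, with $a^{\dag}=S_A(a)^*$ and the Hopf $*$-algebra identity $S_U(S_U(x)^*)=x^*$, I obtain for $a\in I^{\perp}$, $x\in I$ that $\tau(a^{\dag},x)=\overline{\tau(a,S_U(S_U(x)^*))}=\overline{\tau(a,x^*)}=0$ since $x^*\in I$; thus $I^{\perp}$ is $\dag$-stable and $\pi_C$ is $\dag$-preserving. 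Consequently $C=A/I^{\perp}$ is a left $A$-module $\dag$-coalgebra, the relation $(ac)^{\dag}=a^{\dag}c^{\dag}$ descending from its validity on $A$ (where $\dag$ is multiplicative). The factorisation also yields an injection $\bar\phi:C\hookrightarrow I^{\circ}$, so $C$ pairs with $I$ via $\langle\pi_C(a),x\rangle=\tau(a,x)$, nondegenerately in the $C$-variable.

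For the identity \eqref{EqHomSp}, recall $B={}^CA=\{b\in A\mid \pi_C(b_{(1)})\otimes b_{(2)}=\pi_C(1_A)\otimes b\}$ inside $C\otimes A$. Applying $\langle-,x\rangle\otimes\id$ for $x\in I$ turns this into
\[
\tau(b_{(1)},x)\,b_{(2)}=\tau(1_A,x)\,b=\varepsilon_U(x)\,b,
\]
giving the inclusion $\subseteq$. Conversely, an element of $C\otimes A$ vanishes as soon as pairing its $C$-leg against every $x\in I$ kills it (nondegeneracy of $\bar\phi$), so the displayed family of identities forces $\pi_C(b_{(1)})\otimes b_{(2)}-\pi_C(1_A)\otimes b=0$, i.e.\ $b\in{}^CA$. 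This establishes \eqref{EqHomSp}.

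I expect the main obstacle to be the careful handling of the infinite-dimensional duality rather than any single hard computation: the ``pairing'' arguments must be made rigorous through the linear-algebra facts that $\Ker(q\otimes q)=I^{\perp}\otimes A+A\otimes I^{\perp}$ for the surjection $q$ and that $I^{\circ}\otimes I^{\circ}\hookrightarrow (I\otimes I)^{\circ}$, together with the nondegeneracy of $C\hookrightarrow I^{\circ}$. The conceptual heart is simply that $I$ being a unital $*$-subalgebra and a left coideal translates \emph{exactly} into $I^{\perp}$ being a $\dag$-stable left-ideal coideal of $A$.
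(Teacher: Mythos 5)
Your proof is correct: realizing $C$ as $A/I^{\perp}$ and translating the hypotheses on $I$ (unital subalgebra, left coideal, $*$-stable) into $I^{\perp}$ being a coideal, a left ideal, and $\dag$-stable is exactly the standard argument, and your duality bookkeeping (including $\Ker(q\otimes q)=I^{\perp}\otimes A+A\otimes I^{\perp}$, the injection $I^{\circ}\otimes I^{\circ}\hookrightarrow (I\otimes I)^{\circ}$, and the nondegeneracy of $C\hookrightarrow I^{\circ}$ for both directions of \eqref{EqHomSp}) is sound. The paper itself gives no proof of this proposition, deferring to \cite{MS99,Chi18}, so your write-up simply supplies the expected details.
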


We can also lift the construction of the Drinfeld double to this setting. 
\begin{Def}\label{EqDrinfeldDouble}
Assume the set-up of Proposition \ref{PropSetupCoideal}. We define $D(B,I)$ to be the universal $*$-algebra generated by copies of $B,I$ with interchange relations 
\[
xb= \tau(b_{(1)},x_{(-1)}) b_{(0)}x_{(0)},\qquad x\in I,b\in B.
\] 
\end{Def}
We then have again bijectivity of the multiplication maps 
\[
B \otimes I \cong D(B,I),\qquad I\otimes B\cong D(B,I). 
\]


\begin{Def}
A $*$-representation $\hat{\pi}_0$ of $I$ on a pre-Hilbert space $\Hsp_0$ is of $\msF$-type if 
\begin{itemize}
\item $\Hsp_{\xi} := \hat{\pi}_0(I)\xi$ is finite-dimensional for each $\xi\in \Hsp_0$, and
\item Each resulting restriction $\hat{\pi}_{0,\xi}: I \rightarrow B(\Hsp_{\xi})$ is isomorphic to an $I$-subrepresentation of an $\msF$-type $*$-representation $(\Gsp_0,\hat{\theta}_0)$ of $U$:
\[
\Hsp_{\xi} \subseteq \Gsp_0,\qquad \hat{\pi}_{0,\xi}(x) =  \hat{\theta}_0(x),\qquad \forall x\in I.  
\]
\end{itemize} 
We call a $*$-representation of $D(B,I)$ on a pre-Hilbert space of $\msF$-type if the underlying $I$-representation is of $\msF$-type. 
\end{Def}

\begin{Theorem}\label{TheoOneOneInt}
Assume $U$ is a Hopf $*$-algebra, and $\msF = \{\hat{\pi}\}$ a collection of $S_U^2$-compatible finite-dimensional unital $*$-representations of $U$. Assume $I$ is a left coideal $*$-subalgebra of $U$, and let $(A_{\msF},B,C,\pi_C)$ be the associated unitary Doi-Koppinen datum of coideal type as above.

Then there is a one-to-one correspondence between 
\begin{itemize}
\item $\msF$-type $*$-representations of $D(B,I)$, and 
\item unitary Doi-Koppinen modules for $(A_{\msF},B,C,\pi_C)$. 
\end{itemize}
\end{Theorem}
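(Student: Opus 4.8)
The plan is to realize this correspondence as an enrichment of the correspondence between $\msF$-type $*$-representations of $U$ and unitary $A$-comodules recorded in \eqref{EqIndRep}, pushed down along $\pi_C$ to $C$ and restricted along $I \subseteq U$. The backbone is the map
\[
\phi: I \rightarrow \msI = \Lin_{\C}(C,\C),\qquad \phi(x) = \tau(-,x)_{\mid C},
\]
which is well defined because, $C$ being the coimage of $A \to \Lin_\C(I,\C)$, the pairing $A \times U \to \C$ descends to a pairing $C \times I \to \C$. First I would check that $\phi$ is a unital $*$-homomorphism: multiplicativity follows from $\tau(a, xy) = \tau(a_{(1)},x)\tau(a_{(2)},y)$ together with $\Delta_C \circ \pi_C = (\pi_C \otimes \pi_C)\Delta_A$; the unit goes to $\varepsilon_C$ via $\tau(a,1_U) = \varepsilon_A(a)$; and the $*$-compatibility reduces, through the identity $a^\dag = S_A(a)^*$ defining the $\dag$-structure on $C$ and the relation $\tau(a^*,x) = \overline{\tau(a, S_U(x)^*)}$, to the standard Hopf $*$-algebra identities $\tau(S_A(a),y) = \tau(a,S_U(y))$ and $S_U(S_U(x)^*) = x^*$.

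Next I would set up Step 1, the bijection between $\msF$-type $*$-representations of $I$ and unitary right $C$-comodules. Given a unitary $C$-comodule $\delta_0$, composing \eqref{EqDefIdMod} with $\phi$ yields a $*$-representation $\hat\pi_0(x)v = (\id \otimes \tau(-,x))\delta_0(v)$ of $I$; it is of $\msF$-type because $C$ is cosemisimple, so each $\Hsp_\xi$ is a finite direct sum of simple $C$-comodules, and each simple $C$-comodule is, as an $I$-representation, a subobject of the image under $\id\otimes\pi_C$ of the $A$-comodule attached to a suitable $\msF$-type $U$-representation. Conversely, an $\msF$-type $I$-representation has each $\Hsp_\xi$ embedded in some $\msF$-type $U$-representation $\Gsp_0$; pushing the $A$-comodule on $\Gsp_0$ down to $C$ and using that $\Hsp_\xi$, being $I$-invariant, is a $C$-subcomodule produces the comodule structure, which is independent of the embedding because \eqref{EqDefIdMod} recovers it uniquely from $\hat\pi_0$. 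The identification of $I$-invariant subspaces with $C$-subcomodules is the one point that genuinely uses $C \hookrightarrow \Lin_\C(I,\C)$: linear independence of finitely many $c_i \in C$ forces linear independence of the functionals $\tau(c_i,-)$ on $I$, which lets one separate the legs of $\delta_0$ by pairing against suitable $x \in I$.

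The heart of the argument is Step 2, a direct computation showing that under this dictionary the Doi-Koppinen compatibility \eqref{EqCompRelRel} is equivalent to the defining interchange relation of $D(B,I)$. Writing $\delta_0(v) = v_{(0)} \otimes v_{(1)}$ and using that $C$ is a quotient left $A$-module coalgebra, \eqref{EqCompRelRel} reads $\delta_0(bv) = \pi_0(b_{(0)})v_{(0)} \otimes (b_{(1)} \cdot v_{(1)})$. Pairing the $C$-leg against $x \in I$ and invoking the factorization
\[
\tau(a \cdot c, x) = \tau(a a', x) = \tau(a, x_{(-1)})\tau(c, x_{(0)}),\qquad c = \pi_C(a'),
\]
which holds precisely because $I$ is a left coideal, so $\Delta_U(x) = x_{(-1)} \otimes x_{(0)}$ with $x_{(0)} \in I$, gives exactly
\[
\hat\pi_0(x)\pi_0(b)v = \tau(b_{(1)},x_{(-1)})\,\pi_0(b_{(0)})\,\hat\pi_0(x_{(0)})v.
\]
Conversely, the non-degeneracy of $C \times I$ from Step 1 lets one read \eqref{EqCompRelRel} back off this operator identity holding for all $x\in I$.

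Assembling the pieces: an $\msF$-type $*$-representation of $D(B,I)$ restricts to a $*$-representation $\pi_0$ of $B$ and an $\msF$-type $*$-representation of $I$; Step 1 converts the latter into a unitary $C$-comodule and Step 2 converts the interchange relation into \eqref{EqCompRelRel}, so one obtains a unitary Doi-Koppinen module, and the construction is manifestly reversible. I expect the main obstacle to be Step 1 rather than Step 2: the computation in Step 2 is forced once the pairing factorization is in hand, whereas Step 1 requires careful bookkeeping to confirm that $\phi$ is genuinely a $*$-homomorphism into $\msI$ and that the $\msF$-type condition matches the unitary $C$-comodule condition—the crux being that the simple $C$-comodules are precisely, as $I$-representations, the subobjects of restrictions to $I$ of $\msF$-type $U$-representations.
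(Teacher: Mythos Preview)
Your proposal is correct and follows essentially the same route as the paper: first establish the bijection between $\msF$-type $*$-representations of $I$ and unitary right $C$-comodules via the induced pairing $C\times I\to\C$ (using that $C$ injects into $\Lin_{\C}(I,\C)$ for uniqueness and the subcomodule property), then verify that the Doi--Koppinen compatibility \eqref{EqCompRelRel} is equivalent to the interchange relation of $D(B,I)$. You spell out a few details the paper leaves implicit---the $*$-compatibility of $\phi$ and the factorization $\tau(a\cdot c,x)=\tau(a,x_{(-1)})\tau(c,x_{(0)})$---but the architecture is identical.
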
 
\begin{proof}
Let us first note that there is a one-to-one correspondence between $\msF$-type $*$-representations $(\Hsp_0,\hat{\pi}_0)$ of $I$ and unitary right $C$-comodules $(\Hsp_0,\delta_0)$. Indeed, starting from the latter, we obtain by definition of $C$ a well-defined $*$-representation of $I$ through 
\begin{equation}\label{EqRelWeWant}
\hat{\pi}_0(x)\xi = (\id\otimes \tau(-,x))\delta_0(\xi),\qquad x\in I,\xi\in \Hsp_0.
\end{equation}
Then clearly each $\Hsp_{\xi} = \hat{\pi}_0(I)\xi$ is finite-dimensional.

To see that $\hat{\pi}_0$ is of $\msF$-type, it is enough to consider the case where $(\Hsp_0,\delta_0)$ is finite-dimensional and irreducible. Then by virtue of $C$ being a quotient coalgebra of $A$, we can find a finite-dimensional $\msF$-type $*$-representation $\hat{\rho}$ of $U$ such that 
\[
\{(\xi^* \otimes \id)\delta_0(\eta)\mid \xi,\eta\in V\} \subseteq \{\pi_C(U_{\hat{\rho}}(\xi,\eta))\mid \xi,\eta\in \Hsp_{\hat{\rho}}\}.
\]
But this means that $\hat{\pi}_0$ factors throught $\hat{\rho}(I)$, which by irreducibility of $\hat{\pi}_0$ is the same as $\hat{\pi}_0$ being a $*$-subrepresentation of $\hat{\rho}_{\mid I}$. 

Conversely, we note that if $\hat{\pi}_0$ is an $\msF$-type $*$-representation of $I$, we get by definition that there exists for each $\xi \in \Hsp_0$ a unitary right $C$-comodule structure $\delta_{\xi}: \Hsp_{\xi} \rightarrow \Hsp_{\xi}\otimes C$ such that 
\[
\hat{\pi}_0(x)\hat{\pi}_0(y)\xi = (\id \otimes \tau(-,x))\delta_{\xi}(\hat{\pi}_0(y)\xi),\qquad \forall x,y \in I.  
\]
However, by definition of $C$ this comodule structure is then uniquely determined by this condition. It is then straightforward to conclude that 
\[
\delta_0: \Hsp_0 \rightarrow \Hsp_0 \otimes C,\quad \xi\mapsto \delta_{\xi}(\xi)
\]
is a well-defined unitary $C$-comodule structure, related to $\hat{\pi}_0$ via \eqref{EqRelWeWant}.

The theorem is now easily concluded by noticing that if $(\Hsp_0,\hat{\pi}_0)$ is an $\msF$-type $*$-representation of $I$ with associated $C$-comodule $\delta_0: \Hsp_0\rightarrow \Hsp_0\otimes C$, and $\pi_0: B \rightarrow \End(\Hsp_0)$ is a unital $*$-representation, then
\begin{multline*}
\hat{\pi}_0(x)\pi_0(b) \xi = \tau(b_{(1)},x_{(-1)}) \pi_0(b_{(0)})\hat{\pi}_0(x_{(0)})\xi \textrm{ for all } x\in I,b\in B,\xi\in \Hsp_0\\
\iff \quad (b\xi)_{(0)}\otimes (b\xi)_{(1)} = b_{(0)}\xi_{(0)} \otimes b_{(1)}\xi_{(1)} \textrm{ for all }b\in B,\xi\in \Hsp_0.
\end{multline*}
\end{proof} 

Let us now say that a $D(B,I)$-representation of $\msF$-type is \emph{locally complete} if its associated unitary Doi-Koppinen module is locally complete. Then by combining Theorem \ref{TheoOneOneInt} with Theorem \ref{TheoOtherOneOne}, we obtain the following corollary.

\begin{Cor}
There is a one-to-one correspondence between locally complete $\msF$-type $*$-representations of $D(B,I)$, and non-degenerate $*$-representations of $\mcD(B,\mcI)$.
\end{Cor}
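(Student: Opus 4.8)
The plan is to obtain the statement as a composition of the two one-to-one correspondences already established, with the definition immediately preceding the statement serving to align the two notions of local completeness. No genuinely new argument is needed; the work is entirely bookkeeping.

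First I would invoke Theorem \ref{TheoOneOneInt}, which gives a bijection between $\msF$-type $*$-representations of $D(B,I)$ and unitary Doi-Koppinen modules for $(A_{\msF},B,C,\pi_C)$. By the definition just stated, such a $D(B,I)$-representation is declared locally complete exactly when its associated Doi-Koppinen module is locally complete; so this bijection restricts to a bijection between the locally complete objects on each side. This restriction is purely formal, since any bijection of sets restricts to a bijection between a matched pair of subsets, and here the matching is precisely the content of the definition. Next I would apply Theorem \ref{TheoOtherOneOne}, which provides a bijection between locally complete unitary Doi-Koppinen modules and non-degenerate $*$-representations of $\mcD(B,\mcI)$. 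Composing this with the restricted bijection from the previous step produces the claimed correspondence.

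For orientation I would also record how the data threads through the common $C$-comodule, although nothing beyond the two cited theorems is required: the $\msF$-type $I$-representation underlying a $D(B,I)$-module passes to a unitary right $C$-comodule via \eqref{EqRelWeWant}, which in turn becomes the $\mcI$-representation $\hat{\pi}$ of Proposition \ref{PropUnitComodEqui}, while the $B$-representation is transported across unchanged. Crucially, the correspondence is \emph{not} implemented by an algebra map $D(B,I)\to\mcD(B,\mcI)$ (the image of $I$ in $\msI$ need not land in the restricted dual $\mcI$), but rather through the common category of unitary Doi-Koppinen modules; the interchange relation defining $D(B,I)$ and the one defining $\msD(B,\msI)$ nonetheless match under this translation because both encode the single compatibility \eqref{EqCompRelRel} between the $B$-action and the $C$-coaction.

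The only point genuinely requiring care is thus the alignment of the local completeness conditions, which is exactly what the preceding definition arranges by fiat. I expect no residual analytic obstacle, since boundedness of every representation in sight is already guaranteed by Propositions \ref{PropUnifBound} and \ref{PropUnitComodEqui}.
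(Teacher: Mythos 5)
Your proposal is correct and matches the paper exactly: the corollary is obtained by composing the bijection of Theorem \ref{TheoOneOneInt} with that of Theorem \ref{TheoOtherOneOne}, the preceding definition of local completeness for $\msF$-type $D(B,I)$-representations being precisely what makes the composition restrict as claimed. Your additional remarks on how the data threads through the common $C$-comodule are accurate but not needed for the argument.
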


Similarly, let us say that an $\msF$-type $*$-representation of $D(B,I)$ is \emph{admissible} if each of its irreducible $I$-subrepresentations has finite multiplicity. Then we have the following compatibility between a priori different notions of irreducibility.

\begin{Prop}
Assume that $(\Hsp_0,\theta_0)$ is an admissible $\msF$-type $D(B,I)$-representation. Then the associated $\mcD(B,\mcI)$-representation $(\Hsp,\theta)$ is irreducible if and only if $(\Hsp_0,\theta_0)$ is irreducible as a $D(B,I)$-module.  
\end{Prop}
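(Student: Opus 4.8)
The plan is to reduce the statement to the irreducibility criterion already established for locally complete unitary Doi--Koppinen modules (the Lemma of Section~\ref{SecDrinfDoub}), by passing through the correspondences of Theorems~\ref{TheoOneOneInt} and~\ref{TheoOtherOneOne}. First I would record that, since $(\Hsp_0,\theta_0)$ is admissible, the associated unitary Doi--Koppinen module has a locally \emph{finite} underlying $C$-comodule; as each space $\Hsp_D$ is then finite-dimensional it is automatically complete, so $\Hsp_0$ is a locally complete unitary Doi--Koppinen module and $(\Hsp,\theta)$ is exactly the non-degenerate $\mcD(B,\mcI)$-representation attached to it by Theorem~\ref{TheoOtherOneOne}. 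The cited Lemma then gives that $(\Hsp,\theta)$ is irreducible if and only if $\Hsp_0$ admits no non-trivial $B$-stable locally complete unitary $C$-subcomodule. It thus remains to match this condition with algebraic irreducibility of $\Hsp_0$ as a $D(B,I)$-module.

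The crux is the identification of $D(B,I)$-submodules of $\Hsp_0$ with $B$-stable $C$-subcomodules. Since the $B$-action is literally common to $D(B,I)$ and to the Doi--Koppinen structure, this reduces to showing that, for the admissible comodule $\Hsp_0$, a subspace is $I$-stable if and only if it is a $C$-subcomodule. One implication is immediate: by \eqref{EqRelWeWant} an element $x\in I$ acts through $\delta_0$ via the functional $\tau(-,x)\in\msI=\Lin_{\C}(C,\C)$, so every $C$-subcomodule is $I$-stable. For the converse I would exploit that $C$ is the \emph{coimage} of $A\to\Lin_{\C}(I,\C)$: the pairing $C\times I\to\C$ is non-degenerate in the first variable, so for any finite-dimensional subcoalgebra $D\subseteq C$ the restriction map $I\to D^{\circ}$ is surjective. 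Consequently, on any finite-dimensional subcomodule the image of $I$ coincides with that of $\msI$, and in particular with that of $\mcI\cong\oplus_{\alpha}\End_{\C}(\Hsp_{\alpha})$; thus $I$ realizes all the isotypic projections coming from \eqref{EqDecompCoalg} on the finite-dimensional subcomodule generated by any single vector. Given an $I$-stable subspace $\mcK_0$ and $v\in\mcK_0$, applying such a projection (available in the image of $I$) shows that each isotypic component of $v$ again lies in $\mcK_0$. Here admissibility is essential: it makes these isotypic components finite-dimensional, so that their projections are genuinely realized by elements of $I$. Hence $\mcK_0$ splits along isotypic components, and on each finite-dimensional piece $I$-stability equals $\mcI$-stability, i.e.\ the condition of being a $C$-subcomodule. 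Since for the locally finite comodule $\Hsp_0$ every $C$-subcomodule is again locally finite and hence locally complete, the two notions of subobject coincide.

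The main obstacle is precisely this crux step: without admissibility the image of $I$ on $\Hsp_0$ is only weak-$*$ dense in $\msI$ and need not contain the isotypic projections, so an $I$-stable subspace need not be a subcomodule and the two irreducibility notions could genuinely diverge. Once the crux is in place the argument concludes formally: $B$-stable $C$-subcomodules are the same as $D(B,I)$-submodules, these are automatically locally complete by local finiteness, and so the absence of non-trivial such submodules, that is $D(B,I)$-irreducibility of $\Hsp_0$, is equivalent via the Lemma of Section~\ref{SecDrinfDoub} to irreducibility of $(\Hsp,\theta)$ as a $\mcD(B,\mcI)$-representation.
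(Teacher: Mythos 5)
Your proof is correct and follows essentially the same route as the paper's (much terser) argument: admissibility forces every $D(B,I)$-submodule of $\Hsp_0$ to be a locally complete $C$-subcomodule, so that $D(B,I)$-submodules correspond bijectively to closed $\mcD(B,\mcI)$-invariant subspaces of $\Hsp$, and the lemma of Section~\ref{SecDrinfDoub} finishes the job. One small quibble: your claim that admissibility is essential for realizing the isotypic projections by elements of $I$ is overstated --- that step only needs surjectivity of $I\to D^{\circ}$ for the finite-dimensional subcoalgebra $D$ supporting the subcomodule generated by a single vector, which holds without admissibility; admissibility is genuinely needed only where you invoke it last, to guarantee that the resulting subcomodules are locally finite, hence locally complete.
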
 
\begin{proof}
This follows from the fact that, by admissibility, any $D(B,I)$-submodule of $\Hsp_0$ is automatically locally complete, leading to a one-to-one correspondence between closed $\mcD(B,\mcI)$-stable subspaces $V \subseteq \Hsp$ and $D(B,I)$-stable subspaces $V_0 \subseteq \Hsp_0$ through
\[
V \mapsto V_0 := V\cap \Hsp_0. 
\]
\end{proof}

\begin{Exa}\label{ExaComplexification}
We can fit Example \ref{ExaDiagonalCoid} into the setting of this section as follows. 

Let $U$ be a Hopf $*$-algebra, and let $\msF = \{\hat{\pi}\}$ be a collection of $S_U^2$-compatible finite-dimensional unital $*$-representations of $U$. Let $H = H_{\msF}$ be the associated CQG Hopf $*$-algebra of matrix coefficients. 

If $U^{\cop}$ is $U$ with the opposite coproduct, its antipode is given by $S_{U^{\cop}}= S_U^{-1}$. Hence $\msF := \{\hat{\pi}\}$ is still a collection of $S_{U^{\cop}}$-compatible  $*$-representations. The associated CQG Hopf $*$-algebra of matrix coefficients is given by the same vector space $H \subseteq \Lin_{\C}(U,\C)$, but now endowed with the opposite product and the new $*$-structure 
\[
h^{\star} := S_H^2(h)^*,\qquad h\in H.
\]
We denote this CQG Hopf $*$-algebra as $H^{\opp}$. 

Consider now the tensor product Hopf $*$-algebra $T = U\otimes U^{\cop}$, together with its family of $S_T^2$-compatible $*$-representations $\msG := \{\hat{\pi}\otimes \hat{\theta}\mid \hat{\pi},\hat{\theta}\in \msF\}$. Then the associated CQG Hopf $*$-algebra of matrix coefficients is the tensor product Hopf $*$-algebra $A := H\otimes H^{\opp}$, together with its natural pairing with $T$. 

If we now consider 
\[
\Delta: U \rightarrow T,\quad x \mapsto x_{(1)}\otimes x_{(2)},
\]
it is easily seen that $I := \Delta(U)$ is a left coideal $*$-subalgebra of $T$. The associated quotient $A$-module C$^{\dag}$-coalgebra is still given through the map 
\[
\pi_H: A \rightarrow H,\quad h \otimes k \mapsto hk,\qquad h,k\in H,
\]
where $H$ is endowed with its natural C$^\dag$-structure
\[
h^{\dag} = S_H(h)^*,\qquad h\in H.
\]
Through \eqref{EqRightCoideal}, we can then again realize $H$ as a right coideal subalgebra of $A$, now with compatible $*$-structure. 

If we compute the associated Drinfeld double coideal $D(H,U)$  as in Definition \ref{EqDrinfeldDouble}, we see that it is generated by the $*$-algebras $H,U$ with commutation relations
\[
xh = \tau(h_{(3)}\otimes S_H^{-1}(h_{(1)}),x_{(1)}\otimes x_{(3)})h_{(2)}x_{(2)} = \tau(S_H^{-1}(h_{(1)}),x_{(3)})h_{(2)}x_{(2)}\tau(h_{(3)},x_{(1)}),\qquad x\in U,h\in H. 
\]
So, $D(H,U)$ coincides with the usual Drinfeld double between the paired Hopf $*$-algebras $H,U$.
\end{Exa}



\section{Quantization of symmetric pairs}


Let $\mfg$ be a complex semisimple Lie algebra. By the fundamental work of \cite{Jim85,Dri87}, the universal enveloping algebra $U(\mfg)$ of $\mfg$ can be quantized, leading to a Hopf algebra $U_q(\mfg)$ depending on a parameter $q$. For our purposes, we already fix the condition that 
\[
q \,\textrm{ real with }0<q<1,
\]
as this will be important later on when considering associated $*$-structures. (The condition $1<q$ would also be allowed, but can be reduced to the case $q<1$ by symmetry.)

The precise form of $U_q(\mfg)$ that we will consider is as follows: we fix a Cartan subalgebra and Borel subalgebra 
\[
\mfh \subseteq \mfb \subseteq \mfg,
\]
and we let $Q$ be the associated root lattice with 
\begin{itemize}
\item associated root system $\Delta \subseteq Q$,
\item associated positive roots $\Delta^+ \subseteq \Delta$, and 
\item associated positive simple roots $I = \{\alpha_1,\ldots,\alpha_{\ell}\} \subseteq \Delta^+$.
\end{itemize}
We denote the associated weight lattice by $P \supseteq Q$, and we fix a positive-definite form $(-,-)$ on $Q \otimes_{\Z}\R$ which is invariant under the Weyl group $W$ and such that short roots $\alpha$ satisfy $(\alpha,\alpha)=2$. We then write $d_r = (\alpha_r,\alpha_r)/2$, and write $\alpha^{\vee} = 2\alpha/(\alpha,\alpha)$ for the associated coroots. We let $A = (a_{rs})_{rs}$ be the associated Cartan matrix under the convention 
\[
a_{rs} = (\alpha_r^{\vee},\alpha_s),\qquad 1\leq r,s\leq \ell.
\]
We also write 
\[
q_r = q^{d_r},\quad [n]_{q_r} = \frac{q_r^n-q_r^{-n}}{q_r-q_r^{-1}},\quad [n]_{q_r}! = [1]_{q_r}\ldots[n]_{q_r},\quad \binom{m}{n}_{q_r} = \frac{[m]_{q_r}!}{[n]_{q_r}![m-n]_{q_r}!}.  
\]

The precise conventions that we follow are then:
\begin{Def}
We define $U_q(\mfg)$ as the universal algebra generated by elements $K_{\omega}$ for $\omega\in P$, as well as elements $E_r,F_r$ for $1\leq r\leq \ell$, satisfying:
\begin{itemize}
\item $K_{\omega}K_{\chi} = K_{\omega+\chi}$ and $K_0 = 1$ for $\omega,\chi \in P$,
\item $K_{\omega}E_r = q^{(\omega,\alpha_r)}E_rK_{\omega}$ and $K_{\omega}F_r = q^{-(\omega,\alpha_r)}K_{\omega}F_r$ for $\omega\in P$ and $1\leq r\leq \ell$.
\item $E_rF_s -F_sE_r = \delta_{rs}\frac{K_{\alpha_r}-K_{\alpha_r}^{-1}}{q_r-q_r^{-1}}$ for $1\leq r,s\leq \ell$, and 
\item The quantum Serre relations for all $1\leq r\neq s\leq \ell$: 
\[
\sum_{t=0}^{1-a_{rs}}(-1)^t \binom{1-a_{rs}}{t}_{q_r} E_r^{t}E_sE_r^{1-a_{rs}-t} = 0,\quad \sum_{t=0}^{1-a_{rs}}(-1)^t \binom{1-a_{rs}}{t}_{q_r} F_r^{t}F_sF_r^{1-a_{rs}-t} = 0.
\]
\end{itemize}
We endow it with the unique Hopf algebra structure such that 
\[
\Delta(K_{\omega}) =K_{\omega}\otimes K_{\omega},\quad \Delta(E_r) = E_r\otimes 1 + K_{\alpha_r}\otimes E_r,\quad \Delta(F_r) = F_r\otimes K_{\alpha_r}^{-1}+ 1\otimes F_r,
\]
with counit and antipode determined by 
\[
\varepsilon(K_{\omega}) = 1,\quad \varepsilon(E_r) = \varepsilon(F_r) = 0,
\]
\[
S(K_{\omega}) = K_{-\omega},\quad S(E_r)= -K_{\alpha_r}^{-1}E_r,\quad S(F_r) = -F_rK_{\alpha_r}.
\] 
\end{Def}
We write 
\begin{itemize}
\item $U_q(\mfh)$ for the algebra generated by the $K_{\omega}$, \item $U_q(\mfb)$ for the algebra generated by the $K_{\omega}$ and $E_r$, and 
\item $U_q(\mfb^-)$   for the algebra generated by the $K_{\omega}$ and $F_r$.
\end{itemize}
Then these subalgebras are again universal with respect to the relations above involving them, and define natural Hopf subalgebras of $U_q(\mfg)$.

We can turn $U_q(\mfg)$ into a Hopf $*$-algebra by putting
\[
K_{\omega}^* = K_{\omega},\quad E_r^* = F_rK_{\alpha_r},\quad F_r^* = K_{\alpha_r}^{-1}E_r.
\]
We then write this Hopf $*$-algebra as $U_q(\mfu)$. Indeed, in the classical limit one has that $*$ determines an anti-linear Lie algebra involution $*: \mfg \rightarrow \mfg$ such that the real Lie algebra
\[
\mfu := \{X \in \mfg \mid X^* = -X\}
\]
is a compact real form of $\mfg$. We then also write $\mft = \mfu \cap \mfh$. 

\begin{Def}
A finite-dimensional unital $*$-representation $\hat{\pi}: U_q(\mfu) \rightarrow B(\Hsp_{\hat{\pi}})$ is called \emph{type 1} if $\pi(K_{\omega})$ is a positive operator for all $\omega \in P$. 
\end{Def}

It is immediate that type $1$-representations are $S^2$-compatible, since $S^2 = \Ad(K_{-2\rho})$ with 
\[
\rho = \frac{1}{2}\sum_{\alpha \in \Delta^+} \alpha.
\]
Now if $\Hsp_{\hat{\pi}}$ is a type $1$-representation, there exists a joint eigenbasis for the $\hat{\pi}(K_{\omega})$. If $\xi$ is such an eigenvector, we can uniquely write 
\[
\hat{\pi}(K_{\omega})\xi = q^{(\chi,\omega)}\xi,\qquad \forall \omega\in P,
\]
for some $\chi \in \R\otimes_{\Z}Q$. We call $\chi = \wt(\xi)$ the \emph{weight} of $\xi$. 

\begin{Def}
Assume $Q \subseteq F \subseteq P$ is a lattice. We say that a finite-dimensional unital $*$-representation of $U_q(\mfu)$ is of \emph{$F$-type} if any of its weight vectors has weight in $F$. 
\end{Def}
For example, it can be shown that any type $1$-representation is of type $P$. In general, we write 
\[
A := \mcO_q(U_F)
\]
for the Hopf $*$-algebra of $F$-type matrix coefficients, see Definition \ref{DefFtype}. 

Note that this definition has a natural classical analogue, in which $\mcO(U_P)$ is the algebra of regular functions on the unique connected, simply connected compact Lie group $U_P = U_{sc}$ integrating $\mfu$, while $\mcO(U_Q) \subseteq \mcO(U_P)$ is the  one attached to the quotient $U_Q = U_{ad} = U_{sc}/Z(U_{sc})$, the \emph{adjoint} Lie group associated to $\mfu$. 

We use the same notations for the complexification $G_F$ of $U_F$, where $\mcO_q(G_F)$ is simply viewed as $\mcO_q(U_F)$ with the $*$-structure forgotten. 

Continuing our notation from \eqref{EqUnivEmb}, we will then also write the linear dual of $\mcO_q(U_F)$ as
\[
\msU_q^{F}(\mfu) = \Lin(\mcO_q(U_F),\C).
\]
For any such $F$, we obtain an embedding of $*$-algebras
\[
U_q(\mfu) \subseteq \msU_q^F(\mfu).
\]

Assume now that we are given a subset of the simple roots,
\[
X \subseteq I.
\] 
Then we can consider $\mfg_X \subseteq \mfg$ as the Lie algebra generated by the root vectors associated to $X$. We denote $\Delta_X \subseteq \Delta$ for the roots obtained from the root vectors of $\mfg$ inside $\mfg_X$. Then $\Delta_X$ provides a copy of the root system of $\mfg_X$. We denote $\Delta_X^+ = \Delta_X \cap \Delta^+$. We denote by $W_X \subseteq W$ the Weyl group generated by the simple roots in $X$, and by $w_X$ its longest word. Finally, we denote 
\[
\delta_X^{\vee} =\frac{1}{2}\sum_{\alpha \in \Delta_X^+} \alpha^{\vee}.
\] 
\begin{Def}
A \emph{Satake diagram} for $\mfg$ (with fixed root data) consists of 
\begin{itemize}
\item a subset $X \subseteq I$ and
\item an involution $\tau: I \rightarrow I, \alpha_r \mapsto \tau(\alpha_r) = \alpha_{\tau(r)}$, 
\end{itemize}
such that 
\begin{itemize}
\item $\tau$ preserves the bilinear form on $I \subseteq Q$,
\item $\tau$ preserves $X$, and coincides on it with the action of $-w_X$, and 
\item $(\alpha,\delta_X^{\vee}) \in \Z$ for all $\alpha \in I \setminus X$ with $\tau(\alpha) = \alpha$.
\end{itemize}
\end{Def}

Satake diagrams are encoded on top of a Dynkin diagram by indicating the nodes in $X$ as black dots, and by indicating which nodes in $I\setminus X$ get swapped under $\tau$. An example of such a Satake diagram with underyling Dynkin diagram of type $A$ is given by 
\begin{equation}\label{EqTypeA}
\begin{tikzpicture}[scale=.4,baseline={([yshift=-.5ex]current bounding box.center)}]
\node (v1) at (0,0) {};
\node (v2) at (1,0) {};
\node (v3) at (2.8,0) {};
\node (v4) at (2.8,-3.4) {};
\node (v5) at (1,-3.4) {};
\node (v6) at (0,-3.4) {};
\draw (0cm,0) circle (.2cm) node[above]{\small $1$};
\draw (0.2cm,0) -- +(0.6cm,0);
\draw (1cm,0) circle (.2cm);
\draw (1.2cm,0) -- +(0.2cm,0);
\draw[dotted] (1.4cm,0) --+ (1cm,0);
\draw (2.4cm,0) --+ (0.2cm,0);
\draw (2.8cm,0) circle (.2cm) node[above]{\small $p$};
\draw (2.8cm,-0.2cm) --+(0,-0.6cm);
\draw[fill = black] (2.8cm,-1cm) circle (.2cm); 
\draw (2.8cm,-1.2cm) --+ (0,-0.2cm);
\draw[dotted] (2.8cm,-1.4cm) --+ (0,-0.6cm);
\draw (2.8cm,-2cm) --+ (0,-0.2cm);
\draw[fill = black] (2.8cm,-2.4cm) circle (.2cm);
\draw (2.8cm,-2.6cm) --+ (0cm,-0.6cm);
\draw (2.8cm,-3.4cm) circle (.2cm);
\draw (0cm,-3.4cm) circle (.2cm) node[below]{\small $\ell$} ;
\draw (0.2cm,-3.4cm) -- +(0.6cm,0);
\draw (1cm,-3.4cm) circle (.2cm);
\draw (1.2cm,-3.4cm) -- +(0.2cm,0);
\draw[dotted] (1.4cm,-3.4cm) --+ (1cm,0);
\draw (2.4cm,-3.4cm) --+ (0.2cm,0);
\draw[<->]
(v1) edge[bend right] (v6);
\draw[<->]
(v2) edge[bend right] (v5);
\draw[<->]
(v3) edge[bend right] (v4);
\end{tikzpicture}
\end{equation}

Satake diagrams allow one to construct involutive Lie algebra automorphisms of $\mfu$. More precisely, if we linearly extend $\tau$ and consider 
\[
\Theta: Q \rightarrow Q,\quad \alpha \mapsto -w_X\tau(\alpha), 
\]
then there exists an involutive automorphism $\theta = \theta(X,\tau)$ of $\mfu$ whose complex linear extension to $\mfg$ permutes the root spaces as 
\[
\theta(\mfg_{\alpha}) = \mfg_{\Theta(\alpha)}. 
\]
Moreover, 
\begin{itemize}
\item any other such $\theta'$ is inner conjugate to $\theta$ by an element of $T = \exp(\mft) \subseteq G_{sc}$, so 
\[
\theta'= \Ad(t)\circ \theta\circ \Ad(t)^{-1},\qquad t\in T, 
\] 
and
\item any Lie algebra automorphism of $\mfu$ is inner conjugate to $\theta(X,\tau)$ for a unique $(X,\tau)$. 
\end{itemize}
We will refer to any such $\theta(X,\tau)$ as a \emph{Satake automorphism} of $\mfu$ (with respect to the fixed root data). 

\begin{Def}
If $\mfk$ is any Lie subalgebra of $\mfu$, we call $\mfk\subseteq \mfu$ a \emph{symmetric pair} if there exists a Lie algebra involution $\theta: \mfu \rightarrow \mfu$ such that
\[
\mfk = \mfu^{\theta} = \{X\in \mfu \mid \theta(X) = X\}.
\]
We call $\mfk \subseteq \mfu$ a \emph{symmetric pair of Satake type} if $\theta$ is a Satake automorphism.
\end{Def}

For example, the Satake diagram in \eqref{EqTypeA} encodes the inclusion 
\[
\mathfrak{s}(\mfu(p)\oplus \mfu(q))\subseteq \mfsu(p+q),\qquad p+q = \ell+1.
\]

As any Lie algebra involution of $\mfu$ will preserve the associated inner product coming from the Killing form on $\mfg$, it follows that $\mfk$ completely remembers $\theta$, with the $-1$-eigenspace $\mfu^{-\theta} = \mfk^{\perp}$. When quantizing, the primary focus will then be on the quantization of $\mfk \subseteq \mfu$, while the role of $\theta$ becomes less pronounced. 

The quantization of arbitrary symmetric pair Lie algebras was established by G. Letzter \cite{Let99}, with prior approaches for the classical types considered in e.g.\ \cite{NS95, Dij96, Nou96, BF97, DN98}. Subsequently, a generalisation to the Kac-Moody case was established in  \cite{Kol14} (see also the introduction of that paper for more details on the history of these concepts). 

The main new feature that arises in these constructions, is that the subsequent quantization $U_q(\mfk) \subseteq U_q(\mfu)$ is no longer a Hopf $*$-subalgebra, but only a \emph{left or right coideal $*$-subalgebra} - the choice of left vs. right is purely one of convention. Moreover, at least in the case where $\mfk$ has non-trivial center, there are some extra parameters that can be introduced in the quantization of $U_q(\mfk)$, corresponding to moving $\mfk$ away from its Satake position in a particular specified direction \cite{DCNTY23}. 

Before we move on to the precise construction of $U_q(\mfk)$, we make the following comments: 
\begin{itemize}
\item As mentioned, one has a version of $U_q(\mfk)$ as a left or as a right coideal $*$-subalgebra. One can canonically pass between the two choices using the \emph{unitary antipode} $R: U_q(\mfu) \rightarrow U_q(\mfu)$, which is a $*$-preserving anti-multiplicative, anti-comultiplicative involution determined by 
\[
R(K_{\omega}) = K_{-\omega},\quad R(E_r)= -q_r K_{\alpha_r}^{-1}E_r,\quad R(F_r) = -q_r^{-1} F_rK_{\alpha_r}.
\] 
The unitary antipode is simply a rescaling of the usual antipode $S$ as to become compatible with the $*$-structure. If then $I$ is a left coideal $*$-subalgebra, we obtain $J= R(I)$ as a right coideal $*$-subalgebra, and vice versa. 
\item In the original works on quantum symmetric pairs, the $*$-structure does not play any significant role, and compatibility with it was not considered, or not an essential requirement. However, it is crucial that our coideals are $*$-invariant to make the connection to the operator algebraic framework. We refer to \cite[Section 4]{DCM20} for a discussion on this.
\end{itemize}



Let us now introduce the particular form of $U_q(\mfk)$ that we will be interested in, following \cite[Section 4]{DCM20} (in particular, we do not consider the extra deformation parameters). Fix a Satake diagram $(X,\tau)$, and choose a function 
\[
z: I \rightarrow \{\pm1\},\quad z_r =1 \textrm{ when }(\alpha_r,\delta_X^{\vee}) \in \Z,\quad z_rz_{\tau(r)} = -1 \textrm{ when }(\alpha_r,\delta_X^{\vee})\notin \Z. 
\]
Such a function always exists, and its precise choice is not essential: different choices will create coideal $*$-subalgebras which can be transformed into each other under a Hopf $*$-algebra isomorphism of $U_q(\mfu)$ rescaling the generators $E_r,F_r$ by unimodular numbers. 

Recall further that the \emph{Lusztig braid operators} are particular elements $T_r \in \msU_q^P(\mfu)$ for $r\in I$, determined by 
\[
T_r\xi = \underset{-a+b-c = (\wt(\xi),\alpha_r^{\vee}}{\sum_{a,b,c\geq 0}}\frac{ (-1)^bq_r^{b-ac}}{[a]_{q_r}![b]_{q_r}![c]_{q_r}!}E_r^{a}F_r^{b}E_r^{c}\xi,\qquad \xi \in \Hsp_{\hat{\pi}} \textrm{ a type 1 $*$-representation}.
\]
They are invertible, and determine algebra automorphisms of $U_q(\mfu)$ through
\[
\Ad(T_r)(x) = T_rxT_r^{-1},\qquad x\in U_q(\mfu)\subseteq \msU_q^P(\mfu). 
\]
If then $X \subseteq I$ and $w_X = s_{r_1}\ldots s_{r_n}$ is the longest element in the Weyl group $W_X$, we can form the automorphism 
\[
\Ad(T_{w_X}) = \Ad(T_{r_1})\circ \ldots \circ \Ad(T_{r_n})
\]
of $U_q(\mfg)$,  which turns out to be independent of the choice of decomposition of $w_X$. We can now state: 
\begin{Def}
We define $U_q(\mfk) \subseteq U_q(\mfu)$ to be the unital subalgebra of $U_q(\mfu)$ generated by 
\begin{itemize}
\item the elements $E_r,F_r$ for $r\in X$,
\item the elements $K_{\omega}$ for $\omega \in P$ with $\omega = \Theta(\omega)$, and 
\item the elements\footnote{Note that in \cite{DCM20}, these elements were denoted as $C_r$.}
\[
B_r =E_r + q^{(\alpha_r^+,\alpha_r^+)}Y_rK_{\alpha_r},\qquad r\in I \setminus X,
\]
where 
\[
Y_r = -z_{\tau(r)}\Ad(T_{w_X})(F_{\tau(r)}),\qquad \alpha^+ = \frac{1}{2}(\alpha + \Theta(\alpha)). 
\]
\end{itemize}
\end{Def}
\begin{Prop}
The subalgebra $U_q(\mfk) \subseteq U_q(\mfu)$ is a left coideal $*$-subalgebra. 
\end{Prop}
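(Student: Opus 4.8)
The plan is to verify both assertions on the algebra generators of $U_q(\mfk)$, which suffices because $\Delta$ is an algebra homomorphism, $*$ is an anti-homomorphism, and $U_q(\mfu)\otimes U_q(\mfk)$ is a subalgebra of $U_q(\mfu)\otimes U_q(\mfu)$. The enabling observation is that $\Theta$ fixes the simple roots in $X$: for $r\in X$ the Satake axioms give $\tau(\alpha_r)=-w_X(\alpha_r)$, and $w_X^2=\id$, so $\Theta(\alpha_r)=-w_X\tau(\alpha_r)=\alpha_r$, whence $K_{\alpha_r}\in U_q(\mfk)$. Combined with $E_r,F_r\in U_q(\mfk)$ this makes the generators coming from $X$, and the $\Theta$-fixed $K_\omega$, transparent: from $\Delta(E_r)=E_r\otimes 1+K_{\alpha_r}\otimes E_r$, $\Delta(F_r)=F_r\otimes K_{\alpha_r}^{-1}+1\otimes F_r$, $\Delta(K_\omega)=K_\omega\otimes K_\omega$ all right legs lie in $U_q(\mfk)$, while $E_r^*=F_rK_{\alpha_r}$, $F_r^*=K_{\alpha_r}^{-1}E_r$, $K_\omega^*=K_\omega$ all lie in $U_q(\mfk)$. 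This settles everything except the generators $B_r$ with $r\in I\setminus X$.

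For the coideal property of $B_r$, write $\widetilde{Y}_r:=\Ad(T_{w_X})(F_{\tau(r)})$, so $B_r=E_r+c_r\,\widetilde{Y}_rK_{\alpha_r}$ with $c_r=-z_{\tau(r)}q^{(\alpha_r^+,\alpha_r^+)}\in\R$. Since $w_X\in W_X$ fixes the coefficient of $\alpha_{\tau(r)}$ and sends it to the positive root $w_X(\alpha_{\tau(r)})$, the element $\widetilde{Y}_r$ lies in $U_q(\mfn^-)$ and is homogeneous of weight $\Theta(\alpha_r)=-w_X(\alpha_{\tau(r)})$; its coproduct therefore has the shape $\Delta(\widetilde{Y}_r)=\widetilde{Y}_r\otimes K_{\Theta(\alpha_r)}+1\otimes\widetilde{Y}_r+(\text{middle terms})$. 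Expanding $\Delta(B_r)$ and multiplying the $\widetilde{Y}_r$-coproduct by $K_{\alpha_r}\otimes K_{\alpha_r}$, the extreme contributions reorganise as
\[
E_r\otimes 1\;+\;c_r\,\widetilde{Y}_rK_{\alpha_r}\otimes K_{\alpha_r+\Theta(\alpha_r)}\;+\;K_{\alpha_r}\otimes\bigl(E_r+c_r\,\widetilde{Y}_rK_{\alpha_r}\bigr),
\]
whose three right legs are $1$, $K_{\alpha_r+\Theta(\alpha_r)}$, and $B_r$. Here $\alpha_r+\Theta(\alpha_r)$ is $\Theta$-fixed (as $\Theta^2=\id$), so $K_{\alpha_r+\Theta(\alpha_r)}\in U_q(\mfk)$, and $B_r\in U_q(\mfk)$ by definition, so these terms lie in $U_q(\mfu)\otimes U_q(\mfk)$.

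The real work is to show the middle terms of $\Delta(\widetilde{Y}_r)$ also have right legs in $U_q(\mfk)$. The tool I would use is the standard coproduct formula for braid group elements (Lusztig; see also Kolb's treatment of quantum symmetric pairs), expressing $\Delta(\Ad(T_{w_X})u)$ as $(\Ad(T_{w_X})\otimes\Ad(T_{w_X}))\bigl(\mcR\,\Delta(u)\,\mcR^{-1}\bigr)$, where $\mcR$ is the quasi-$R$-matrix of $U_q(\mfg_X)$. Since $\mcR\in U_q(\mfg_X)\,\widehat{\otimes}\,U_q(\mfg_X)$ and $\Ad(T_{w_X})$ stabilises the Hopf subalgebra $U_q(\mfg_X)$, every correction to the two extreme terms has right leg in $U_q(\mfg_X)$ times a $\Theta$-fixed Cartan factor; after the $K_{\alpha_r}$-twist this lands in $U_q(\mfg_X)\,U_q(\mfk)\subseteq U_q(\mfk)$. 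This is the first main obstacle: bookkeeping the quasi-$R$-matrix corrections precisely enough to see that no stray $E_{\tau(r)}$ or $F_{\tau(r)}$ survives in a right leg.

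For the $*$-property of $B_r$ I would compute, using $c_r\in\R$, $K_{\alpha_r}^*=K_{\alpha_r}$ and $E_r^*=F_rK_{\alpha_r}$,
\[
B_r^*=F_rK_{\alpha_r}+c_r\,K_{\alpha_r}\,\widetilde{Y}_r^{\,*},\qquad \widetilde{Y}_r^{\,*}=\Ad(T_{w_X})\bigl(K_{\alpha_{\tau(r)}}^{-1}E_{\tau(r)}\bigr)=K_{\Theta(\alpha_r)}\,\Ad(T_{w_X})(E_{\tau(r)}),
\]
invoking the known compatibility of the Lusztig operators with the compact $*$-structure (one must track here whether $T_{w_X}$ or $T_{w_X}^{-1}$ appears, which only alters the outcome by explicit $q$-powers). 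A weight count shows $B_r^*$ has homogeneous components of weights $-\alpha_r$ and $-\Theta(\alpha_r)$, matching those produced by the generators $B_s$ after multiplication by a $\Theta$-fixed $K_\mu$. The precise sign normalisation encoded in the Satake function $z$, together with the coefficient $q^{(\alpha_r^+,\alpha_r^+)}$, is exactly what forces $B_r^*$ into the subalgebra generated by the $B_s$, by $U_q(\mfg_X)$ and by the $\Theta$-fixed $K_\mu$, i.e.\ into $U_q(\mfk)$. I expect the two genuinely delicate points to be this quasi-$R$-matrix bookkeeping and the verification that the sign condition on $z$ closes up the $*$-computation; both are precisely where the defining axioms of a Satake diagram get used.
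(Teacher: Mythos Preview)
Your outline is broadly correct and in fact more detailed than what the paper offers: the paper's own ``proof'' consists entirely of citations to \cite{Kol14} for the coideal property and to \cite{DCM20,BW18} for the $*$-invariance. Your strategy of checking generators, handling the $X$-part and the $\Theta$-fixed Cartan trivially, and isolating the two delicate points for $B_r$ is exactly the shape of the arguments in those references.

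Two remarks on the details you flag as obstacles. First, the formula you write for $\Delta(\Ad(T_{w_X})u)$ in terms of conjugation by a quasi-$R$-matrix is not the one Kolb uses, and as stated it is not quite right (the relation between Lusztig operators and the quasi-$R$-matrix in Lusztig's book has the quasi-$R$-matrix intertwining $\Delta(T_w u)$ and $(T_w\otimes T_w)\Delta(u)$ rather than conjugating one into the other, and one must keep track of which of Lusztig's operators $T_i',T_i''$ is in play). Kolb's actual argument in \cite[Proposition~5.2]{Kol14} bypasses this and proceeds by a direct induction on a reduced expression for $w_X$, showing that the middle terms of $\Delta\bigl(\Ad(T_{w_X})F_{\tau(r)}\bigr)$ lie in $U_q(\mfn^-)\otimes U_q(\mfn^-_X)K_{\Theta(\alpha_r)}$ using the standard fact that $T_i$ sends $F_s$ into $U_q(\mfn^-)$ whenever $s_i(\alpha_s)>0$. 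Either route can be made to work, but your formula needs repair before it carries the argument.

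Second, for $*$-invariance, weight counting alone does not suffice: knowing that $B_r^*$ has components of the right weights does not place it in $U_q(\mfk)$, since $U_q(\mfk)$ is not cut out by weight conditions. What \cite[Lemma~4.23]{DCM20} (and \cite[Proposition~4.6]{BW18}) actually do is compute $B_r^*$ explicitly and identify it with a scalar multiple of $B_{\tau(r)}$ times a $\Theta$-fixed $K_\mu$, possibly corrected by an element of $U_q(\mfg_X)$. It is precisely in matching those scalar prefactors that the sign function $z$ and the particular coefficient $q^{(\alpha_r^+,\alpha_r^+)}$ are used, confirming your diagnosis of where the Satake axioms enter; what remains is to carry that coefficient match through.
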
 
\begin{proof}
The coideal property can be proven along the lines of e.g.\  \cite[Proposition 5.2]{Kol14}, see also the discussion in \cite[Section 4]{DCM20}.  The $*$-invariance follows from \cite[Lemma 4.23]{DCM20}, see also \cite[Proposition 4.6]{BW18}. 
\end{proof} 

\begin{Exa}
Consider $\mfg = \mfsl(2,\C)$. Then we have associated to this the Satake diagram $(\emptyset,\id)$, giving the symmetric pair $\mfso(2) \subseteq \mfsu(2)$, with $\mfso(2)$ generated by $E-F$. The associated quantized enveloping algebra is given by 
\[
U_q(\mfso(2)) = \C[B],\qquad B = E- FK = E-E^*.
\]
The first detailed study of this case was carried out in \cite{Koo93}. 
\end{Exa}

\begin{Exa}\label{ExaDiagonal}
If we consider $\mfu\oplus \mfu$ with simple roots labeled by $I \sqcup I'$, then we can associate to it the Satake diagram $(\emptyset,\tau)$ with $\tau$ flipping the nodes: 
\[
\tau(r) = r',\qquad \tau(r') = r.
\]
Classically, the resulting inclusion is the diagonal inclusion 
\[
\mfu \subseteq \mfu \oplus \mfu. 
\]
In the quantized setting, we get that 
\[
U_q((\mfu\oplus \mfu)^{\theta}) \subseteq U_q(\mfu\oplus \mfu) = U_q(\mfu) \otimes U_q(\mfu)
\] 
is generated by all 
\[
K_{\omega}\otimes K_{\omega}^{-1},\qquad E_r\otimes 1 - q_rK_{\alpha_r}\otimes F_r,\quad 1\otimes E_r - q_r F_r \otimes K_{\alpha_r}.
\]
If we now apply to this the Hopf $*$-algebra isomorphism 
\[
\id\otimes \kappa: U_q(\mfu) \otimes U_q(\mfu) \rightarrow U_q(\mfu)\otimes U_q(\mfu)^{\cop},\qquad \kappa(K_{\omega}) = K_{\omega}^{-1},\quad \kappa(E_r) =- q_rF_r,\quad \kappa(F_r) = -q_r^{-1}E_r,
\]
we see that we are back in the situation of Example \eqref{ExaComplexification}.
\end{Exa}

\section{Quantization of semisimple algebraic real Lie groups}

Resume the setting of the previous section. Given an involution $\theta$ of $\mfu$, we can construct a new real Lie subalgebra of $\mfg$ by putting 
\[
\mfl = \mfl_{\theta} := \{X \in \mfg \mid X^* = -\theta(X)\}.
\]
Then $\mfl$ will be a semisimple real Lie algebra, and any semisimple real Lie algebra arises in this way. In fact, we see immediately that
\[
\mfk = \mfl \cap \mfu \subseteq \mfg,
\]
and this results into bijections 
\[
*\textrm{-stable real semisimple Lie subalgebras } \mfl \subseteq \mfg \quad \leftrightarrow \quad \textrm{symmetric pairs }\mfk \subseteq \mfu,
\]
known as \emph{Cartan duality}. In particular, real semisimple Lie algebras can also be encoded by Satake diagrams (although some care is needed in stating equivalences between Satake diagrams). The diagram in \eqref{EqTypeA} for example encodes the real Lie algebra $\mfsu(p,q)$. 

Assume now that we have chosen a lattice $Q \subseteq F \subseteq P$, and let $U_F \subseteq G_F$ be the associated Lie groups. If $\theta = \theta(X,\tau)$ is a Satake involution such that $\tau(F) = F$, then $\theta$ can be integrated to a (complex) Lie group involution 
\[
\theta: G_F \rightarrow G_F,\qquad \theta(U_F) = U_F. 
\] 
In particular, we can consider 
\[
L_F  = L_F^{\theta}:= \{g\in G_F \mid g^* = \theta(g)^{-1}\},
\]
which will be a Lie subgroup of $G_F$ with $\mfl$ as its Lie algebra. If $F= P$, then $L_P$ will be connected, but this need not be the case in general, e.g.\ one can consider (with respect to the properly chosen Cartan system) the inclusion $SO(m,n)\subseteq SO(m+n,\C)$ for $m,n\geq 1$, arising from the involution 
\[
\theta: x \mapsto \begin{pmatrix} I_m & 0 \\ 0 & -I_n\end{pmatrix} x \begin{pmatrix} I_m & 0 \\ 0 & -I_n\end{pmatrix}.
\]

Following now the notations of \eqref{EqQuotientCoalg} and \eqref{EqHomSp}, we have associated to $U_q(\mfk)$ the quotient left $\mcO_q(U_F)$-module coalgebra 
\[
\pi_{K_F}: \mcO_q(U_F) \rightarrow \mcO_q(K_F)
\]
arising as the coimage of the map 
\[
\pi_{K_F}: \mcO_q(U_F) \rightarrow \Lin_{\C}(U_q(\mfk),\C),\quad  U_{\hat{\pi}}(\xi,\eta) \mapsto (x\mapsto \langle \xi,\hat{\pi}(x)\eta\rangle),\quad \hat{\pi}\textrm{ of $F$-type},\xi,\eta\in \Hsp_{\hat{\pi}},x\in U_q(\mfk),
\]
and the associated right coideal $*$-subalgebra
\[
\mcO_q(K_F\backslash U_F) = \{U_{\pi}(\xi,\eta) \in \mcO_q(U_F)\mid \hat{\pi}\textrm{ of $F$-type},\xi,\eta\in \Hsp_{\hat{\pi}}, x\xi = \varepsilon(x)\xi\textrm{ for all }x\in U_q(\mfk)\}.
\]

Recall now Definition \ref{EqDrinfeldDouble} and the definitions given in Proposition \ref{PropDefReducedDual} and Proposition \ref{PropUnivEnve}.


\begin{Def}
We define the \emph{quantized enveloping algebra} of $\mfl$ to be
\begin{equation}\label{EqIwaInf}
U_q(\mfl) := D(\mcO_q(K_P\backslash U_P),U_q(\mfk)).
\end{equation}
We define the \emph{quantized convolution algebra} of $L_F$ to be
\begin{equation}\label{EqIwaInt}
\mcU_q(L_F) = \mcD(\mcO_q(K_F \backslash U_F), \mcU_q(K_F)).
\end{equation}
We define the \emph{universal group C$^*$-algebra} of $L_F$ to be 
\begin{equation}\label{EqIwaCstar}
C^*_q(L_F) = C^*(\mcU_q(L_F)).
\end{equation}
\end{Def}

This requires a word of explanation. Recall that if consider $\mfg$ as a real Lie algebra and put 
\[
\mfg = \mfu \oplus \mfa \oplus \mfn
\]
for the associated \emph{Iwasawa decomposition}, then $(\mfg,\mfu,\mfa\oplus \mfn)$ has the structure of a \emph{Manin triple} (see e.g.\ \cite{CP95}). We can then view the associated Iwasawa decomposition
\[
G_F = U_FA_FN_F
\]
of $G_F$ as a particular integrated version $(G_F,U_F,A_FN_F)$ of this Manin triple. By Drinfeld duality \cite{Dri87, ES98}, this means that we should interpret $\mcO_q(U_F)$ as a quantized enveloping algebra of $\mfa\oplus \mfn$, or more accurately as a quantized convolution $*$-algebra $\mcU_q(A_FN_F)$ of $A_FN_F$ (meaning a quantization of the convolution $*$-algebra of compact support functions on $A_FN_F$).

Now as $\mfl \subseteq \mfg$ was chosen to be in Satake form, it follows that the Iwasawa decomposition of $\mfl$ is obtained immediately from the one for $\mfg$: 
\[
\mfl = \mfk \oplus \mfa^0 \oplus \mfn^0,\quad \mfk = \mfu \cap \mfl,\quad \mfa^0 = \mfa \cap \mfl,\quad \mfn^0 = \mfn\cap \mfl.
\]
If we then view the associated integrated Iwasawa decomposition
\[
L_F = K_F A_F^0N_F^0,
\]
another instance of Drinfeld duality, this time for coisotropic Lie subalgebras \cite{Dri93,CG06}, allows us to view 
\[
\mcO_q(K_L\backslash G_L) = \mcU_q(A_{F}^0N_{F}^0).
\]
It is now clear that \eqref{EqIwaInf} and \eqref{EqIwaInt} may be seen more precisely as quantizations of $U(\mfl)$, resp.\ $\mcU(L_F)$, together with their Iwasawa decomposition. Here $\mcU(L_F)$ is to be seen as those functions in the convolution $*$-algebra of compact support functions on $L_F$ that generate a finite-dimensional subspace when translated from the left (or right) with $U_F$. 

The precise way in which $\mcU_q(L_F)$ gives rise to $\mcU(L_F)$ in the classical limit will not be dealt with here.


 


We believe the above setup to be the appropriate one for the quantisation of (linear, algebraic) semisimple real Lie groups. For example, when viewing the constructions of Definition \ref{DefUnitDrinfMod} for the triple 
\[
(\mcO_q(U_F),\mcO_q(K_F\backslash U_F),\mcO_q(K_F)),
\] 
one immediately gets the appropriate notions for the corresponding ones in the classical setting of semisimple real Lie groups. We also stress that, although $U_q(\mfl)$ or $\mcU_q(L_F)$ do not have any Hopf algebra structure themselves (for example $U_q(\mfl)$ is only a left coideal inside the Drinfeld double $\mcD(U_q(\mfu),\mcO_q(U_P))$), there is nevertheless a monoidal structure on the representation category 
\[
\Rep_q(L_F) := \Rep_*(C_q^*(L_F)),
\]  
as elaborated on in \cite[Section 2.2]{DCDz21}.

At the moment, there are still some natural extensions of the classical theory to be considered: 
\begin{itemize}
\item A theory of induction from quantum parabolic subalgebras needs to be developed. Here there are two questions to address: 
\begin{itemize}
\item What are the appropriate general quantum parabolic subalgebras to consider in the case of $U_q(\mfl)$?
\item How to correctly implement the associated induction of $*$-representations on the operator algebraic level? This question can already be addressed directly for the quantum parabolic subalgebra $\mcO_q(K_L\backslash U_L)$ (i.e. the construction of \emph{principal series} representations).
\end{itemize}
\item Through the formalism of  \cite{DCDz21}, a natural weight should become available on $C^*_q(L_F)$. It should then be feasible to obtain an associated Plancherel formula for $C^*_q(L_F)$. The case of $L_F = SL(2,\R)$ is currently being examined by the author and J. Dzokou Talla.
\item A classification, or at least a construction of large classes of irreducible representations of the $C^*_q(L)$ should be feasible through either classical techniques, or by constructions accessible only in the quantum realm. For $L_F = SL(2,\R)$, a full classification was achieved in \cite{DCDz24}.  
\end{itemize}

One question that should be feasible to answer quite directly, is whether any irreducible $*$-representation of $\mcU_q(\mfl)$ is admissible. Unfortunately, a resolution of this problem could not be obtained in time for the submission of this article.

Note that for the diagonal inclusion considered in Example \ref{ExaDiagonal}, we find through Example \ref{ExaComplexification} that the above considerations become the ordinary ones for the usual Drinfeld double of $U_q(\mfg)$ and $\mcO_q(U_P)$. In this case, the resulting $*$-algebra $U_q(\mfg_{\R})$ will quantize the complex Lie algebra $\mfg$ as a real Lie algebra, and \emph{will} be equipped with a coproduct (as a Drinfeld double of Hopf $*$-algebras). In this case, the above questions have been (mostly) fully answered through the works of \cite{Ar17,Ar18,VY20,VY23}.



\begin{thebibliography}{00}
\bibitem[ASH09]{ASH09} A. Abella, W.F. Santos, M. Haim, Compact coalgebras, compact quantum groups and the positive antipode, \emph{São Paulo J. Math. Sci.} \textbf{3} (2) (2009), 193--229.
\bibitem[Ar17]{Ar17} Y. Arano, Comparison of unitary duals of Drinfeld doubles and complex semisimple Lie groups, \emph{Comm. Math. Phys.} \textbf{351} (3) (2017), 1137--1147.
\bibitem[Ar18]{Ar18} Y. Arano, Unitary spherical representations of Drinfeld doubles, \emph{J. Reine Angew. Math.} \textbf{742} (2018), 157--186.
\bibitem[BF97]{BF97} W. Baldoni and P. M. Frajria, The quantum analog of a symmetric pair: A construction in type $(C_n, A_1 \times C_{n-1})$, \emph{Trans. Amer. Math. Soc.} \textbf{8} (1997), 3235--3276.
\bibitem[BW18]{BW18} H. Bao, W. Wang, Canonical bases arising from quantum symmetric pairs, \emph{Invent. Math.} \textbf{213} (3) (2018), 1099--1177.
\bibitem[CMS97]{CMS97} S. Caenepeel, G. Militaru and Z. Shenglin, Crossed modules and Doi-Hopf modules, \emph{Israel J. Math.} \textbf{100} (1997), 221--247.
\bibitem[CP95]{CP95} V. Chari, A. N. Pressley, A guide to quantum groups, \emph{Cambridge University Press} (1995).
\bibitem[Chi18]{Chi18} A. Chirvasitu, Relative Fourier transforms and expectations on coideal subalgebras, \emph{J. Algebra} \textbf{516} (2018), 271--297.
\bibitem[CG06]{CG06} N. Ciccoli and F. Gavarini, A quantum duality principle for coisotropic subgroups and Poisson quotients, \emph{Adv. Math.} \textbf{199} (2006), 104--135.
\bibitem[DNR01]{DNR01} S. D\u{a}sc\u{a}lescu, C. N\u{a}st\u{a}sescu, \cb{S}. Raianu, Hopf algebras: an introduction, \emph{Monographs and Textbooks in Pure and Applied Mathematics} \textbf{235}, Marcel Dekker (2001).
\bibitem[DCY13]{DCY13} K. De Commer and M. Yamashita, Tannaka-Kre\u{i}n duality for compact quantum homogeneous spaces. I. General Theory, \emph{Theory Appl. Categ.} \textbf{28} (31) (2013), 1099--1138.
\bibitem[DCM20]{DCM20} K. De Commer and M. Matassa, Quantum flag manifolds, quantum symmetric spaces and their associated universal K-matrices, \emph{Adv. Math.} \textbf{366} (2020), 107029.
\bibitem[DCDz21]{DCDz21} K. De Commer, J. R. Dzokou Talla, Invariant integrals on coideals and their Drinfeld doubles, \emph{preprint}, arXiv:2112.07476.
\bibitem[DCDz24]{DCDz24} K. De Commer and J.R. Dzokou Talla, Quantum $SL(2,\R)$ and its irreducible representations, \emph{J. Operator Theory} \textbf{91} (2) (2024), 101--128.
\bibitem[DCNTY23]{DCNTY23} K. De Commer, S. Neshveyev, L. Tuset and M. Yamashita, Comparison of quantizations of symmetric spaces: cyclotomic Knizhnik-Zamolodchikov equations and Letzter-Kolb coideals, \emph{Forum of Mathematics, Pi} \textbf{11} (14) (2023), doi:10.1017/fmp.2023.11.
\bibitem[DK94]{DK94} M.S. Dijkhuizen and T.H. Koornwinder, CQG algebras: a direct algebraic approach to compact quantum groups, \emph{Lett. Math. Phys.} \textbf{32} (1994), 315--330.
\bibitem[Dij96]{Dij96} M.S. Dijkhuizen, Some remarks on the construction of quantum symmetric spaces, In:  \emph{Representations of Lie Groups, Lie Algebras and Their Quantum Analogues, Acta Appl. Math.} \textbf{44} (1-2)  (1996), 59--80.
\bibitem[DN98]{DN98} M.S. Dijkhuizen and M. Noumi, A family of quantum projective spaces and related q-hypergeometric orthogonal
polynomials, \emph{Trans. Amer. Math. Soc.} \textbf{350} (8) (1998), 3269--3296.
\bibitem[Doi92]{Doi92} Y. Doi, Unifying Hopf modules, \emph{J. Algebra} \textbf{153} (1992), 373--385. 
\bibitem[Dri87]{Dri87} V. G. Drinfeld, Quantum groups, in: \emph{Proceedings of the International Congress of Mathematicians}, \textbf{1}, \textbf{2} (Berkeley, Calif., 1986), \emph{Amer. Math. Soc.} (1987), 798--820.
\bibitem[Dri93]{Dri93} V. G. Drinfeld, On Poisson homogeneous spaces of Poisson-Lie groups, \emph{Teoret. Mat. Fiz.} \textbf{95} (2) (1993), 226--227.
\bibitem[ES98]{ES98} P. Etingof, O. Schiffmann, Lectures on Quantum Groups, \emph{Lectures in Math. Phys.}, International Press (1998).
\bibitem[Jim85]{Jim85} M. Jimbo, A $q$-difference analog of $U(\mfg)$ and the Yang-Baxter equation, \emph{Lett. Math. Phys.} \textbf{10} (1985), 63--69.
\bibitem[KK03]{KK03} E. Koelink and J. Kustermans, A locally compact quantum group analogue of the normalizer of $SU(1,1)$ in $SL(2,\C)$, \emph{Comm. Math. Phys.} \textbf{233} (2003), 231-296.
\bibitem[Kol14]{Kol14} S. Kolb, Quantum symmetric Kac-Moody pairs, \emph{Adv. Math.} \textbf{267} (2014), 395-469.\bibitem[Koo93]{Koo93} T. Koornwinder, Askey-Wilson polynomials as zonal spherical functions on the $SU(2)$ quantum group, \emph{SIAM Journal Math. Anal.} \textbf{24} (N3) (1993), 795--813.
\bibitem[Kop95]{Kop95} M. Koppinen, Variations on the smash product with applications to group-graded rings, \emph{J. Pure Appl. Algebra} \textbf{104} (1995), 61--80. 
\bibitem[Kor94]{Kor94} L.I. Korogodsky, Quantum Group $SU(1, 1) \rtimes \mathbb{Z}_2$ and super tensor products, \emph{Commun. Math. Phys.} \textbf{163} (1994), 433--460.
\bibitem[Let99]{Let99} G. Letzter, Symmetric pairs for quantized enveloping algebras, \emph{J. Algebra} \textbf{220} (2) (1999), 729-767.
\bibitem[Let02]{Let02} G. Letzter, Coideal subalgebras and quantum symmetric pairs, \emph{New directions in Hopf algebras, Math. Sci. Res. Inst. Publ.} \textbf{43}, Cambridge Univ. Press, Cambridge (2002), 117--165.
\bibitem[LS91]{LS91} S. Levendorskii and Y. Soibelman, Algebras of Functions on Compact Quantum Groups, Schubert Cells and Quantum Tori, \emph{Commun. Math. Phys.} \textbf{139} (1991), 141--170.
\bibitem[MS99]{MS99} E. F. M\"{u}ller and H.-J. Schneider, Quantum homogeneous spaces with faithfully flat module structures, \emph{Israel J. Math.} \textbf{111} (1999), 157--190. 
\bibitem[NS95]{NS95} M. Noumi and T. Sugitani, Quantum symmetric spaces and related q-orthogonal polynomials, In: \emph{Group Theoretical Methods in Physics (ICGTMP) (Toyonaka, Japan, 1994), World Sci. Publishing, River Edge, N.J.} (1995), 28--40.
\bibitem[Nou96]{Nou96} M. Noumi, Macdonalds symmetric polynomials as zonal spherical functions on some quantum homogeneous spaces, \emph{Adv. Math.} \textbf{123} (1) (1996), 16--77.
\bibitem[Sch00]{Sch00} P. Schauenburg, Doi-Koppinen Hopf Modules Versus Entwined Modules, \emph{New York J. Math.} \textbf{6} (2000), 325--329.
\bibitem[Tak79]{Tak79} M. Takeuchi, Relative Hopfmodules - Equivalences and freeness criteria, \emph{J. Algebra} \textbf{60} (1979), 452--471.
\bibitem[Tak80]{Tak80} M. Takeuchi, $\mathrm{Ext}_{\mathrm{ad}}(SpR,\mu^A) \cong \widehat{\mathrm{Br}}(A/k)$, \emph{J. Algebra} \textbf{67} (1980), 436--475.
\bibitem[Twi92]{Twi92} E. Twietmeyer, Real forms of $U_q(\mathfrak{g})$, \emph{Lett. Math. Phys.} \textbf{24} (1992), 49-58.
\bibitem[VY20]{VY20} C. Voigt and R. Yuncken, Complex Semisimple Quantum Groups and Representation Theory, \emph{Lecture Notes in Mathematics} \textbf{2264}, Springer International Publishing, 376+X pages.  
\bibitem[VY23]{VY23} C. Voigt and R. Yuncken, The Plancherel formula for complex semisimple quantum groups, \emph{Ann. Sci. \'{E}c. Norm . Sup\'{e}r.} \textbf{56} (1), 299--322.
\bibitem[Wor87]{Wor87} S. L. Woronowicz, Compact matrix pseudogroups, \emph{Comm. Math. Phys.} \textbf{111} (1987), 613--665.
\bibitem[Wor88]{Wor88} S. L. Woronowicz, Tannaka-Krein duality for compact matrix pseudogroups. Twisted $SU(N)$ groups, \emph{Invent. Math.} \textbf{93} (1) (19), 35--76.
\bibitem[Wor91]{Wor91} S.L. Woronowicz, Unbounded elements affiliated with C$^*$-algebras and noncompact quantum groups, \emph{Comm. Math. Phys.} \textbf{136} (1991), 399--432.
\bibitem[Wor00]{Wor00} S.L. Woronowicz, Extended $SU(1,1)$ quantum group. Hilbert space level, Preprint KMMF (unfinished) (2000).
\end{thebibliography}
\end{document}